\numberwithin{equation}{section}
\newtheorem{thm}[equation]{Theorem} 
\newtheorem{prop}[equation]{Proposition}
\newtheorem{lemma}[equation]{Lemma} 
\newtheorem{cor}[equation]{Corollary}
\newtheorem{example}[equation]{Example}
\newtheorem{question}[equation]{Question}
\theoremstyle{definition}
\newtheorem{remark}[equation]{Remark}
\newtheorem{definition}[equation]{Definition}
\DeclareMathOperator{\Tr}{Tr} 
\DeclareMathOperator{\Det}{Det}
\newcommand{\nat}{\ \natural \ }
\newcommand{\ot}{\otimes}
\newcommand{\eps}{{\varepsilon}}
\newcommand{\La}{{\Lambda}}
\title[Hopf automorphisms and twisted extensions]
{Hopf automorphisms and twisted extensions} 
\author{Susan Montgomery}
\address{Department of Mathematics, University of Southern California,
Los Angeles, CA}
\email{smontgom@usc.edu}
\author{Maria D.\ Vega}
\address{Department of Mathematics, North Carolina State University,
Raleigh, NC}
\email{mdvega@ncsu.edu} 
\author{Sarah Witherspoon}
\address{Department of Mathematics, Texas A\&M University, College Station,
TX}
\email{sjw@math.tamu.edu}
\date{June 5, 2015}
\thanks{The first author was supported by NSF grant DMS-1301860, the second author by an Alliance Post-Doctoral  Fellowship supported by  NSF grant DMS-0946431, and the third author by NSF grants DMS-1101399 and DMS-1401016.} 
\begin{document}

\maketitle

\begin{abstract}
We give some applications of a Hopf algebra constructed from
a group acting on another Hopf algebra $A$ as Hopf automorphisms, namely Molnar's smash coproduct
Hopf algebra.
We find connections between the exponent and Frobenius-Schur indicators
of a smash coproduct and the twisted exponents and twisted Frobenius-Schur indicators
of the original Hopf algebra $A$.
We study the category of modules of the smash coproduct.
\end{abstract}


\section{Introduction}

Molnar \cite{Ml}  defined smash coproducts of Hopf algebras,
putting them on equal footing with the better-known smash products by
viewing both as  generalizations of semidirect products of groups.
Recently smash coproducts have made an appearance as examples of new phenomena
in representation theory \cite{BW,DE}.
In this paper we propose several applications of smash coproducts.
In particular, the smash coproduct construction will allow us
to ``untwist'' some invariants defined via the action
of a Hopf algebra automorphism, such as the
twisted exponents and the twisted Frobenius-Schur indicators.

We note that considering Hopf automorphisms is a timely topic, since there has been
recent progress in determining the automorphism groups of some Hopf algebras 
\cite{AD, Ke, R3, SV, Y}. There has also been much recent work on indicators; their importance 
lies in the fact that they are invariants of the category of representations of the Hopf algebra, 
and may be defined for more abstract categories \cite{NSc}. Moreover the notion of twisted
indicators can be extended to pivotal categories \cite{SV3}.

We start by defining the smash coproduct $A\nat k^G$, for any Hopf algebra $A$
with an action of a finite group $G$ by Hopf automorphisms, in the next section.
In Section \ref{sec:exp} we recall the notions of exponent and
twisted exponent \cite{SV2} of a Hopf algebra, and 
find connections between the exponent
of  $A\nat k^G$ and twisted exponents of $A$ itself.
In Section \ref{sec:FS1} we assume the Hopf algebra $A$ is semisimple.
We recall definitions of Frobenius-Schur indicators \cite{KSZ} 
and twisted Frobenius-Schur indicators \cite{SV} for simple modules over the Hopf algebra, and 
give relationships between the 
indicators of the smash coproduct $A\nat k^G$ and twisted indicators of $A$ itself. 

In Section \ref{sec:FS2} we do not assume the Hopf algebra is semisimple. We introduce the twisted 
Frobenius-Schur
indicators of the regular representation of such a Hopf algebra, simultaneously generalizing
indicators for not necessarily semisimple Hopf algebras
\cite{KMN} and twisted indicators for semisimple Hopf algebras \cite{SV}. 
Again we find a connection with the Frobenius-Schur indicator of a smash
coproduct. We compute an example for which the Hopf algebra $A$ is of dimension 8
in Section \ref{sec:example}. 
Finally in Section \ref{sec:tp} we study the structure of categories of modules
of $A\nat k^G$, showing that they are equivalent to  semidirect product
tensor categories ${\mathcal{C}}\rtimes G$, where $\mathcal C$ is a
category of $A$-modules. 

Throughout, 
$k$ will be an algebraically closed field of characteristic 0.


\section{The smash coproduct}\label{sec:prelim}

Our Hopf algebra was defined by Molnar \cite[Theorem 2.14]{Ml}, who called it the smash coproduct, although our definition 
seems different at first glance. See also \cite[p.\ 357]{R2}.

Let $A$ be a Hopf algebra over a field $k$ and let a finite
group $G$ act as Hopf algebra automorphisms of $A$. Let $k^G$ be the algebra 
of set functions from $G$ to $k$ under pointwise multiplication; that is, if $\{p_x \mid x\in G\}$ denotes the basis of $k^G$ dual to $G$,
then $p_x p_y  = \delta_{x,y} p_x$ for all $x,y\in G$. 
Recall that $k^G$ is a Hopf algebra with comultiplication given by  $\Delta(p_{x}) =  \sum _{y\in G} p_y \ot  p_{y^{-1}x},$ 
counit $\eps(p_x) = \delta_{1, x}$ and antipode $S(p_x) = p_{x^{-1}}$ for all $x\in G$. 

Then we may form the {\it smash coproduct Hopf algebra} 
$$K= A \nat k^G$$ 
with algebra structure the usual tensor product of algebras.
Denote by $a\nat p_x$ the element $a\ot p_x$ in $K$,
for each $a\in A$ and $x\in G$. 
Comultiplication is given by 
$$ \Delta(a \nat p_{x}) = \sum _{y\in G} (a_1 \nat p_y)\ot (( y^{-1}\cdot a_2)\nat p_{y^{-1}x})
$$
for all $x\in G$, $a\in A$. 
The counit and antipode are  determined by
$$
   \varepsilon(a \nat p_{x})= \delta_{1,x}\varepsilon(a)1 \ \ \mbox{ and } \ \ 
   S(a \nat p_{x})=  \ (x^{-1}\cdot S(a)) \nat p_{x^{-1}}.
$$
If $\La_A$ is an integral for $A$, then $\La_K = \La_A \nat p_1$ is an integral for $K$.

Note that Molnar defines the smash coproduct for the right coaction of any commutative 
Hopf algebra $H$. 
We show that our construction is actually his smash coproduct with $H =k^G$, by dualizing our $G$-action to a $k^G$-coaction. 

\begin{lemma} (1) $K$ as above is isomorphic to the smash coproduct as in \cite[Theorem~2.14]{Ml}, and thus is a Hopf algebra.

(2) If $A$ is finite-dimensional, then $K^* \cong A^* \# kG$, the smash product Hopf algebra as in \cite[Theorem 2.13]{Ml}.
\end{lemma}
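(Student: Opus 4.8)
The plan is to verify both claims by exhibiting explicit isomorphisms and checking they respect the Hopf structure. Since the excerpt gives me the full Hopf structure of $K = A \nat k^G$ (multiplication, comultiplication, counit, antipode), the work is really about matching this against Molnar's definitions via the duality between the $G$-action and a $k^G$-coaction.

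For part (1), the key observation is that a left $G$-action on $A$ by Hopf automorphisms dualizes to a right $k^G$-coaction $\rho \colon A \to A \ot k^G$. The natural guess is $\rho(a) = \sum_{x \in G} (x \cdot a) \ot p_x$, since then the coaction "records" the action of each group element in the corresponding dual-basis component. First I would check that this $\rho$ is a well-defined right $k^G$-comodule algebra structure that makes $A$ into a comodule algebra in the sense required by Molnar, using that $G$ acts by algebra automorphisms and that $k^G$ has the comultiplication $\Delta(p_x) = \sum_y p_y \ot p_{y^{-1}x}$. Then I would write out Molnar's smash coproduct comultiplication formula \cite[Theorem 2.14]{Ml} in terms of $\rho$ and substitute the explicit $\rho$ above; the expectation is that this reproduces exactly the comultiplication $\Delta(a \nat p_x) = \sum_{y} (a_1 \nat p_y) \ot ((y^{-1}\cdot a_2)\nat p_{y^{-1}x})$ stated in the excerpt, with the $y^{-1}\cdot a_2$ factor emerging precisely from the dualization. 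Matching the counit and antipode is then a short computation. Once the comultiplication matches, part (1) follows because Molnar has already proved his object is a Hopf algebra, so $K$ inherits the Hopf axioms through the identification.

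For part (2), I would use the standard duality between smash products and smash coproducts together with the finite-dimensionality hypothesis. The plan is to take linear duals: $K^* = (A \nat k^G)^* \cong A^* \ot (k^G)^*$ as vector spaces, and to recall that $(k^G)^* \cong kG$ as Hopf algebras. The multiplication on $K^*$ is dual to the comultiplication on $K$, and the comultiplication on $K^*$ is dual to the multiplication (the tensor product algebra) on $K$. I would then verify that dualizing the comultiplication formula of $K$ gives precisely the smash product multiplication on $A^* \# kG$, where the group $G$ now acts on $A^*$ by the transpose of its action on $A$. This is again a bookkeeping computation: the $\sum_y$ and the $y^{-1}\cdot a_2$ in $\Delta_K$ transpose into the crossed-product multiplication rule $(f \# x)(g \# y) = f(x \cdot g) \# xy$ characteristic of the smash product. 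I would appeal directly to Molnar's \cite[Theorem 2.13]{Ml} description of $A^* \# kG$ to confirm the match.

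The main obstacle will be bookkeeping rather than conceptual: correctly tracking the inverses and the placement of the $G$-action in the dualization, so that the coaction $\rho$ and its dual transpose land on the correct tensor factors and reproduce Molnar's formulas exactly. In particular the appearance of $y^{-1}$ (rather than $y$) in the stated comultiplication must be shown to be forced by the comultiplication of $k^G$, and getting the variance of the $G$-action right on passing to $A^*$ in part (2) is where sign-of-the-exponent type errors are easy to make. I expect no genuine difficulty beyond carefully comparing conventions with \cite{Ml}, since once the defining coaction $\rho$ is identified, both parts reduce to matching structure constants against Molnar's two theorems.
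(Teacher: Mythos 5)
Your proof of part (1) is essentially the paper's: you define the same coaction $\rho(a)=\sum_{x\in G}(x\cdot a)\ot p_x$, verify the comodule axioms, and invoke Molnar's theorem. One small caution: you describe checking that $A$ is a $k^G$-comodule \emph{algebra}, but Molnar's smash coproduct requires $A$ to be a comodule \emph{bialgebra}, so you must also verify the comodule coalgebra condition, namely that the $G$-action respects $\Delta_A$ in the sense $x\cdot(\sum_a a_1\ot a_2)=\sum_a (x\cdot a)_1\ot (x\cdot a)_2$ --- the paper checks both conditions explicitly, and without the coalgebra half Molnar's theorem does not apply. For part (2) you take a genuinely different route: the paper disposes of it in one line by citing Molnar's general duality result \cite[Theorem 5.4]{Ml}, of which the statement is a special case, whereas you propose to prove the duality by hand, transposing $\Delta_K$ into the smash product multiplication on $A^*\# kG$. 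Your computation is correct in outline and buys self-containedness, making the transpose $G$-action on $A^*$ explicit (watch the inverse here: the action $(x\cdot f)(a)=f(x^{-1}\cdot a)$ is what makes the crossed-product rule come out with $xy$ rather than $yx$), at the cost of the bookkeeping that Molnar's theorem lets the paper skip entirely.
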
 

\begin{proof} (1) Given the left action of $G$ on $A$, we define $\rho: A \to A \ot k^G$ by 
$a \mapsto \sum_{x \in G}  (x \cdot a) \ot p_x$. 
Then $\rho$ is a right comodule map, using the fact that the $G$-action on $A$ 
satisfies  $x\cdot( y \cdot a) = (xy \cdot a)$ and $1\cdot a = a$ for all $x,y\in G$ 
and $a\in A$. 

Next we note that $A$ is a right comodule algebra under $\rho$ since the $G$-action is multiplicative, that is 
 $(x \cdot a) (x \cdot b) = x \cdot (ab)$. 
Also $A$ is a right comodule coalgebra, as the $G$-action preserves the coalgebra structure of $A$, that is, 
$x \cdot (\sum_a a_1 \ot a_2) = \sum_a (x \cdot a)_1 \ot (x \cdot a)_2.$ Thus $A$ is a right $k^G$-comodule bialgebra.

Finally the antipode also dualizes to the antipode given by Molnar, and thus Molnar's theorem \cite[Theorem 2.14]{Ml} applies.

(2) This is a special case of Molnar's result \cite[Theorem 5.4]{Ml}.

\end{proof}


\section{Hopf powers and exponents}\label{sec:exp}

In any Hopf algebra $H$, we denote the $n$th Hopf power of an element $x\in H$ by $x^{[n]} = \sum_x x_1 x_2 x_3 \ldots x_n;$ that is, 
first apply $\Delta_H$ $n-1$ times to $x$ and then multiply. Note that $x \mapsto x^{[n]}$ is a linear map. 

For $H$ semisimple, recall that the exponent of $H$, $exp(H)$, is the smallest positive integer $n$,
if it exists, such that 
$x^{[n]} = \eps(x) 1$ for all  $x \in H$. More generally, this definition makes sense whenever $S^2 = id$. We assume this 
property of $S$ unless stated otherwise.

Recently \cite{SV2} introduced the {\it twisted exponent}, 
where $exp$ is twisted by an automorphism of $H$ of finite order. 
Assume that $\tau \in Aut(H)$ and that $n$ is a multiple of the order of $\tau$. Define 
the $n$th {\em $\tau$-twisted Hopf power} of $x$ to be 
$$x^{[n,{\tau}]} := 
 \sum_x x_1 ({\tau}\cdot x_2) ( {\tau^2}\cdot x_3)  \ldots ({\tau^{n-1}}\cdot x_n).$$ 
\begin{definition} \label{expdef}{\rm $exp_{\tau}(H)$ is the smallest positive integer $n$, if it exists, such that $n$ is a
multiple of the order of $\tau$ and 
$x^{[n,{\tau}]} = \eps(x) 1$ for all $x \in H$.}
\end{definition}
Since $\tau$ is a Hopf automorphism, $\eps(\tau \cdot x) = \eps(x)$ for any $x \in H$, and thus $\eps(x^{[n,{\tau}]}) = \eps(x^{[n]}) = \eps(x)$.
If $H$ is not semisimple and $S^2\neq id$ yet $S$ is still bijective,
there is a more general definition of the twisted exponent in \cite{SV2}. 

We will need the following proposition which is a special case of 
\cite[Proposition 3.4]{SV2}. 

\begin{prop} \label{SV2Prop}
Suppose that the Hopf automorphism $\tau$ of the semisimple Hopf algebra $H$
has order $r$, $exp_{\tau} (H)$ is finite, and $m$
is a positive integer. Then 
   $x^{[mr,\tau]} = \varepsilon(x) 1 $ for all $x\in H$ if and only if
$exp_{\tau}(H)$ divides $m$. 
\end{prop}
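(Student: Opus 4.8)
The plan is to recast the twisted Hopf powers as ordinary convolution powers of a single endomorphism of $H$, and then to read off the divisibility from the order of that endomorphism in a group. The crux is a multiplicativity (``cocycle'') identity for twisted powers: writing $\Delta(x)=\sum x_1\ot x_2$, I would show that for all nonnegative integers $a,b$ and all $x\in H$,
$$
x^{[a+b,\tau]}=\sum (x_1)^{[a,\tau]}\,\tau^{a}\big((x_2)^{[b,\tau]}\big).
$$
For this I would write the iterated coproduct as $\Delta^{(a+b-1)}=(\Delta^{(a-1)}\ot\Delta^{(b-1)})\Delta$, so that the $(a+b)$-fold product defining $x^{[a+b,\tau]}$ breaks after its $a$-th factor; because each $\tau^{a}$ is an algebra map with $\tau^{a}\circ\tau^{j}=\tau^{a+j}$, the final $b$ factors reassemble as $\tau^{a}$ applied to a $b$-fold twisted power. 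Specializing to $a=r$ and using $\tau^{r}=\id$ removes the twist and gives $x^{[r+b,\tau]}=\sum (x_1)^{[r,\tau]}(x_2)^{[b,\tau]}$.

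I would then pass to the convolution algebra $\mathrm{End}_k(H)$, with product $(f*g)(x)=\sum f(x_1)g(x_2)$ and unit $u\colon x\mapsto\eps(x)1$, and set $P_n(x)=x^{[n,\tau]}$. The displayed identity with $b=(m-1)r$ reads $P_{mr}=P_{r}*P_{(m-1)r}$, whence $P_{mr}=P_{r}^{*m}$ by induction. As $x\mapsto x^{[n,\tau]}$ is linear, the condition of the proposition is precisely the endomorphism identity $P_{mr}=u$, that is $P_{r}^{*m}=u$. Here finiteness of $exp_{\tau}(H)$ enters: it is a multiple of $r$, say $exp_{\tau}(H)=rd$, and by definition $P_{rd}=u$ with $rd$ least, i.e. $P_{r}^{*d}=u$ with $d$ least. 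Hence $P_{r}$ is a unit of $(\mathrm{End}_k(H),*)$ (with inverse $P_{r}^{*(d-1)}$) of order exactly $d$, and an element of order $d$ in a group satisfies $g^{m}=1$ iff $d\mid m$; therefore $P_{r}^{*m}=u$ iff $d\mid m$. Thus $x^{[mr,\tau]}=\eps(x)1$ for all $x$ if and only if $d=exp_{\tau}(H)/r$ divides $m$, equivalently $exp_{\tau}(H)$ divides $mr$.

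The single genuine obstacle is the cocycle identity of the first paragraph: one must keep the Sweedler indices aligned with the escalating powers of $\tau$ so the product splits cleanly, using both coassociativity and that $\tau$ is an algebra automorphism. Once it is in place, the reduction to $P_{r}^{*m}=u$, the convolution-invertibility of $P_{r}$ (automatic once one of its convolution powers equals $u$), and the order argument are routine. Semisimplicity and $S^{2}=\id$ are used only through the standing hypotheses ensuring that $exp_{\tau}(H)$ and the twisted powers are well defined.
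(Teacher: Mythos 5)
Your mechanics are all sound, and it is worth noting at the outset that the paper itself gives no argument here: Proposition \ref{SV2Prop} is simply quoted as a special case of \cite[Proposition 3.4]{SV2}, so your write-up is a reconstruction of an omitted proof rather than an alternative to one in the text. The route you chose is the standard one for exponent statements (compare \cite{EG} in the untwisted case): the twisted power maps are convolution products, $P_n=\id\ast\tau\ast\tau^2\ast\cdots\ast\tau^{n-1}$ in $(\mathrm{End}_k(H),\ast)$, from which your cocycle identity $P_{a+b}=P_a\ast(\tau^a\circ P_b)$ follows exactly as you sketch (coassociativity to split the iterated coproduct, multiplicativity of $\tau^a$ to pull it out of the last $b$ factors). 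One streamlining: you do not need finiteness of $exp_{\tau}(H)$ to invert $P_r$; each $\tau^j$ is a bialgebra endomorphism and hence convolution-invertible with inverse $\tau^j\circ S$, so $P_r$ is always a unit of the convolution algebra. Finiteness of the exponent is needed only so that the order $d$ of $P_r$ is finite. Granting that, $P_{mr}=P_r^{\ast m}$ and the group-order argument are correct, as is your identification of $d$ with $exp_{\tau}(H)/r$ under Definition \ref{expdef}.

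The genuine issue is your final sentence: what you prove is that $x^{[mr,\tau]}=\eps(x)1$ for all $x$ if and only if $d=exp_{\tau}(H)/r$ divides $m$, equivalently $exp_{\tau}(H)$ divides $mr$ --- but the proposition as printed asserts that $exp_{\tau}(H)$ divides $m$, and these are not equivalent. In fact, under Definition \ref{expdef} the printed statement is false: let $H$ be the group algebra of a cyclic group of order $3$ with generator $g$, and let $\tau(g)=g^{-1}$, so $r=2$. For any group-like $h$ one has $h^{[2m,\tau]}=\big(h\,\tau(h)\big)^m=1$, so by linearity $x^{[2m,\tau]}=\eps(x)1$ for all $x\in H$ and all $m\geq 1$; hence $exp_{\tau}(H)=2$, and taking $m=1$ the left-hand condition holds while $2\nmid 1$. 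So the proposition contains a misprint (presumably in transcribing \cite[Proposition 3.4]{SV2}), and the correct form is exactly the one your argument delivers, with $exp_{\tau}(H)\mid mr$. Reassuringly, the corrected form is also what the paper actually uses: in the second half of the proof of the theorem on $exp(A\nat k^G)$, one only knows $exp_{z^{-1}}(A)\mid m$ (divisibility of the total Hopf power $m$, not of $m/\mathrm{ord}(z)$), and only your version then yields $a^{[m,z^{-1}]}=\eps(a)1$. Your proof is therefore complete and correct for the corrected statement, but you should flag explicitly that the statement as printed is off by the factor $r$, rather than recording the equivalence with $exp_{\tau}(H)\mid mr$ as if it matched the claim.
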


Next we give some formulas for our Hopf algebras $K = A\nat k^G$.

\begin{lemma} \label{twisted-powers}
Let $w = a \nat p_x \in A\nat k^G$, the smash coproduct as above. Then 
$$
(a \nat p_x)^{[n]} 
= \sum_{z \in G , \  z^n = x} \ a^{[n,{z^{-1}}]} \nat p_z .
$$
In particular for $w = \La_K = \La_A \nat p_1$, replace $z$ by $z^{-1}$. Then 
$$
\La_K^{[n]} 
= \sum_{z \in G , \  z^n = 1}
\La_A^{[n, z]} \nat p_{z^{-1}} .
$$
\end{lemma}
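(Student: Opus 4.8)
The plan is to first prove a closed formula for the iterated comultiplication of $K$ and then apply the multiplication map, exploiting the fact that $k^G$ annihilates cross terms. Concretely, I would establish by induction on $n$ the formula for the $(n-1)$-fold comultiplication
$$
\Delta^{(n-1)}(a\nat p_x) = \sum_{\substack{z_1,\ldots,z_n\in G\\ z_1z_2\cdots z_n = x}} \bigotimes_{i=1}^{n}\bigl((z_1\cdots z_{i-1})^{-1}\cdot a_i \nat p_{z_i}\bigr),
$$
where $a_1\ot\cdots\ot a_n$ denotes the iterated coproduct of $a$ in $A$ in extended Sweedler notation. The base case $n=1$ is immediate, reading the empty product $z_1\cdots z_0$ as $1$, and the case $n=2$ is exactly the defining comultiplication of $K$ after setting $z_1=y$ and $z_2=y^{-1}x$.

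For the inductive step I would apply $\Delta$ to the last tensor factor, using $\Delta^{(n-1)} = (\id^{\ot(n-2)}\ot\Delta)\Delta^{(n-2)}$. Writing $g = z_1\cdots z_{n-2}$, the last factor of $\Delta^{(n-2)}(a\nat p_x)$ is $(g^{-1}\cdot a_{n-1})\nat p_{z_{n-1}}$. Applying $\Delta$ and using that the $G$-action preserves the coalgebra structure of $A$ (the right comodule coalgebra property of the action), so that $(g^{-1}\cdot b)_1\ot(g^{-1}\cdot b)_2 = (g^{-1}\cdot b_1)\ot(g^{-1}\cdot b_2)$, splits this factor into two while introducing a new summation variable $y$. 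Relabelling the group elements so that the new pair $(y,\,y^{-1}z_{n-1})$ multiplies to the old $z_{n-1}$ reproduces the claimed formula with $n$ factors, the twist on the final factor emerging correctly from $y^{-1}g^{-1} = (gy)^{-1}$.

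Next I would apply the iterated multiplication. Since the algebra structure of $K$ is the tensor product one, with $(a\nat p_x)(b\nat p_y) = \delta_{x,y}\,ab\nat p_x$, the product of the $n$ factors vanishes unless $z_1 = z_2 = \cdots = z_n =: z$. Under this collapse the constraint $z_1\cdots z_n = x$ becomes $z^n = x$, the twist on the $i$th factor becomes $(z^{i-1})^{-1}=z^{-(i-1)}$, and the surviving $A$-component is
$$
\sum_a a_1\,(z^{-1}\cdot a_2)\,(z^{-2}\cdot a_3)\cdots(z^{-(n-1)}\cdot a_n),
$$
which is precisely $a^{[n,z^{-1}]}$ by the definition of the $\tau$-twisted Hopf power with $\tau = z^{-1}$. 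Summing over the admissible $z$ yields the first formula. The second formula is then the special case $x=1$, after re-indexing the sum by the bijection $z\mapsto z^{-1}$ of the set $\{z\in G : z^n=1\}$, which sends $a^{[n,z^{-1}]}\nat p_z$ to $\La_A^{[n,z]}\nat p_{z^{-1}}$.

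I expect the main obstacle to be the bookkeeping in the inductive step: keeping straight which product of the $z_i$ twists each Sweedler component, and checking that the automorphism labels shift correctly from $(z_1\cdots z_{i-1})^{-1}$ to $(z_1\cdots z_i)^{-1}$ when a factor is split by $\Delta$. Once the iterated coproduct formula is verified, the passage to the multiplication and the identification with the twisted Hopf power are routine.
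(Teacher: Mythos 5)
Your proposal is correct and follows essentially the same route as the paper, whose proof simply asserts that ``a calculation shows'' the identity $(a\nat p_x)^{[n]}=\sum_{z^n=x}a_1(z^{-1}\cdot a_2)\cdots(z^{-(n-1)}\cdot a_n)\nat p_z$; your induction on the iterated coproduct, the collapse of the sum to $z_1=\cdots=z_n$ via orthogonality of the $p_z$, and the re-indexing $z\mapsto z^{-1}$ for the integral are exactly that calculation carried out in full. The details check out, including the use of the comodule-coalgebra property $(g^{-1}\cdot b)_1\ot(g^{-1}\cdot b)_2=(g^{-1}\cdot b_1)\ot(g^{-1}\cdot b_2)$ and the twist bookkeeping $y^{-1}g^{-1}=(gy)^{-1}$.
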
 

\begin{proof} A calculation shows that 
$$ (a \nat p_x)^{[n]} 
= \sum_{z \in G , \  z^n = x} a_1(z^{-1} \cdot a_2) (z^{-2} \cdot a_3)\cdots  (z^{-(n-1)} \cdot a_n)  \nat p_z,
$$
which gives the first equation in the lemma. The second follows from the first. 
\end{proof} 

We now find a relation among the (twisted) exponents of $A$, $G$, and $K=A\nat k^G$. 

\begin{thm}
Assume that $S^2 = id$ in $A$. Then the exponent of $K$ is the least common multiple of $exp (G)$ and $exp_z(A)$ for all
$z\in G$.
\end{thm}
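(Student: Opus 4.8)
The plan is to reduce the defining identity $w^{[n]} = \eps(w)1_K$ to twisted-power conditions in $A$ by testing it on the spanning set $\{a\nat p_x\}$ and reading off the coefficient of each $p_z$. Since both $w\mapsto w^{[n]}$ and $w\mapsto \eps(w)1_K$ are linear, it is enough to examine these elements, and I would record at the outset the two bookkeeping facts $1_K = \sum_{y\in G} 1_A\nat p_y$ and $\eps(a\nat p_x) = \delta_{1,x}\eps(a)$. First I would check that $exp(K)$ is even defined: a short computation with the antipode formula $S_K(a\nat p_x) = (x^{-1}\cdot S_A(a))\nat p_{x^{-1}}$, using that $G$ acts by Hopf automorphisms and $S_A^2=\id$, gives $S_K^2=\id$ in $K$. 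Moreover, since $k$ has characteristic zero and $S_A^2=\id$, the Larson--Radford theorem makes $A$ semisimple, so Proposition \ref{SV2Prop} is available.

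Next I would apply Lemma \ref{twisted-powers}, namely $(a\nat p_x)^{[n]} = \sum_{z\in G,\ z^n=x} a^{[n,z^{-1}]}\nat p_z$, and compare it with $\eps(a\nat p_x)1_K = \delta_{1,x}\,\eps(a)\sum_{y\in G}1_A\nat p_y$, using linear independence of the $p_z$. The crucial feature is that $1_K$ spreads over all of $G$, so for $x=1$ the $p_z$-coefficient on the right is $\eps(a)1_A$ for every $z\in G$. This forces $z^n=1$ for all $z$: if some $z_0$ had $z_0^n\neq 1$, then taking $a=1_A$ and $x=z_0^n\neq 1$ yields $(1_A\nat p_x)^{[n]} = \sum_{z:\,z^n=x}1_A\nat p_z$, whose $p_{z_0}$-component is $1_A\neq 0$, while $\eps(1_A\nat p_x)=0$. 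Hence $exp(G)\mid n$ is necessary.

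Conversely, assuming $exp(G)\mid n$, every $z\in G$ satisfies $z^n=1$, so all terms with $x\neq 1$ vanish on both sides automatically, and the case $x=1$ reduces the identity to $a^{[n,z^{-1}]} = \eps(a)1_A$ for all $z\in G$ and all $a\in A$. By Proposition \ref{SV2Prop}, applied with $\tau=z^{-1}$ (whose order divides $exp(G)\mid n$), this holds for all $a$ exactly when $exp_{z^{-1}}(A)\mid n$. Since $z\mapsto z^{-1}$ permutes $G$, the collection $\{exp_{z^{-1}}(A):z\in G\}$ coincides with $\{exp_z(A):z\in G\}$. Combining the two cases, $w^{[n]}=\eps(w)1_K$ holds for all $w$ if and only if $n$ is a common multiple of $exp(G)$ and of $exp_z(A)$ for every $z\in G$; the least such $n$ is their least common multiple, which is therefore $exp(K)$.

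I expect the main obstacle to be the coefficient bookkeeping in the middle step — specifically, exploiting that $1_K$ is supported on every $p_y$ (the source of the group-exponent condition) and correctly matching the twist $z^{-1}$ from Lemma \ref{twisted-powers} with the order hypothesis required to invoke Proposition \ref{SV2Prop}. A minor point to confirm is finiteness: $exp(G)<\infty$ because $G$ is finite and each $exp_z(A)<\infty$ by semisimplicity, so their least common multiple is finite and $exp(K)$ indeed exists.
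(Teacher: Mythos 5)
Your proof is correct and follows essentially the same route as the paper's: both directions reduce $(a\nat p_x)^{[n]}=\eps(a\nat p_x)1_K$ via Lemma~\ref{twisted-powers} to the conditions $exp(G)\mid n$ (from the $a=1$ case) and $a^{[n,z^{-1}]}=\eps(a)1$ for all $z$ (from the $x=1$ case), and then invoke Proposition~\ref{SV2Prop} exactly as the paper does. Your added remarks --- that $S_K^2=\id$ so $exp(K)$ is defined, that Larson--Radford gives semisimplicity of $A$ (needed since Proposition~\ref{SV2Prop} assumes it, a point the paper leaves implicit), and that $z\mapsto z^{-1}$ permutes $G$ --- are harmless refinements rather than a different approach.
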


\begin{proof}
Let  $n = exp(K)$, so that 
$$(a \nat  p_x)^{[n]} = \eps(a  \nat p_x)1 = \eps(a) \delta_{x, 1} 1 = \eps(a) \delta_{x, 1} \sum_z p_z $$ 
for all $a\in A$ and $x \in G$. When $a = 1$, then $(p_x)^{[n]} = \delta_{x,1}1$ implies that $exp(G) = exp(k^G)$ divides $n$. 
Thus $z^n = 1$ for all $z \in G$. 
By the above calculation,  $(a \nat p_1)^{[n]} = \eps(a) 1$,
 and so by Lemma \ref{twisted-powers},   
$a^{[n,{z^{-1}}]} = \eps(a)$ for all $z \in G$ and $a \in A$.
Therefore by Proposition~\ref{SV2Prop}, $exp(K)$ is a common multiple of $exp(G)$
and $exp_z(A)$ for all $z\in G$.

Now let $m$ be any common multiple of $exp(G)$ and $exp_z(A)$ for all $z\in G$.
By Lemma~\ref{twisted-powers} and Proposition~\ref{SV2Prop}, 
\begin{eqnarray*}
   (a \nat p_x)^{[m]} & = & \sum_{z\in G, \ z^m =x} a^{[m,z^{-1}]} \nat p_z \\
    & =& \delta_{1,x} \sum_{z\in G} a^{[m,z^{-1}]}\nat p_z \\
    &=& \delta_{1,x} \varepsilon(a) \sum_{z\in G} p_z \ \ 
 = \ \  \varepsilon (a \nat p_x) 1_K .
\end{eqnarray*} 
Again by Proposition~\ref{SV2Prop}, $exp(K)$ divides $m$.
\end{proof}

\quad

We will use the following lemma in calculations.

\begin{lemma}\label{S-exp}
Let $H$ be a Hopf algebra 
and let $\tau$ be a Hopf automorphism
of $H$ whose order divides $n$. Then 
$ \ \ S (x^{[n,\tau]}) = \tau^{-1}\cdot ( S(x)^{[n,\tau^{-1}]}) $
for all $x\in H$. 
\end{lemma}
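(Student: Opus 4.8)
The plan is to compute both sides directly from the definition of the twisted Hopf power and match them factor by factor, exploiting three standard structural facts about the antipode together with the hypothesis that $\mathrm{ord}(\tau)\mid n$, i.e. $\tau^n = \id$.

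First I would apply $S$ to the definition $x^{[n,\tau]} = \sum_x x_1(\tau\cdot x_2)(\tau^2\cdot x_3)\cdots(\tau^{n-1}\cdot x_n)$. Since $S$ is linear and anti-multiplicative, this reverses the order of the factors, giving $\sum_x S(\tau^{n-1}\cdot x_n)\cdots S(\tau\cdot x_2)\,S(x_1)$. Because $\tau$ is a Hopf automorphism it commutes with the antipode, so $S(\tau^i\cdot x_j) = \tau^i\cdot S(x_j)$, and I can pull each power of $\tau$ outside the antipode to obtain $\sum_x (\tau^{n-1}\cdot S(x_n))\cdots(\tau\cdot S(x_2))\,S(x_1)$.

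Next I would expand the right-hand side. The key input is the iterated coalgebra anti-homomorphism property $\Delta S = (S\otimes S)\Delta^{\mathrm{op}}$, which upon iteration shows that the Sweedler components of $S(x)$ appear in reverse order, namely $S(x)_{(i)} = S(x_{n+1-i})$. Substituting these into $S(x)^{[n,\tau^{-1}]} = \sum S(x)_1(\tau^{-1}\cdot S(x)_2)\cdots(\tau^{-(n-1)}\cdot S(x)_n)$ and then applying the algebra map $\tau^{-1}$ to the entire product (distributing it across all factors) yields $\sum_x(\tau^{-1}\cdot S(x_n))(\tau^{-2}\cdot S(x_{n-1}))\cdots(\tau^{-n}\cdot S(x_1))$.

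Finally I would reconcile the two expressions using $\tau^n = \id$, which lets me rewrite $\tau^{-j}$ as $\tau^{n-j}$ for each $j$; the $j$-th factors then coincide, proving the claim. I expect the main obstacle to be purely bookkeeping: two independent order-reversals occur, one from the anti-multiplicativity of $S$ on the left-hand side and one from the flip in $\Delta S = (S\otimes S)\Delta^{\mathrm{op}}$ on the right, and the powers of $\tau$ must be tracked carefully through both. The hypothesis $\mathrm{ord}(\tau)\mid n$ is precisely what is needed to convert the negative exponents $\tau^{-j}$ produced on the right into the nonnegative exponents $\tau^{n-j}$ appearing in the expansion of $S(x^{[n,\tau]})$.
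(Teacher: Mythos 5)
Your proposal is correct and uses exactly the same ingredients as the paper's proof: anti-multiplicativity of $S$, the identity $S(\tau^i\cdot x_j)=\tau^i\cdot S(x_j)$ for a Hopf automorphism, the reversal of Sweedler components under $\Delta^{n-1}\circ S$, distributing the algebra map $\tau^{-1}$ across the product, and $\tau^n=\id$ to convert exponents. The only cosmetic difference is that you expand both sides and match factors, whereas the paper transforms the left side into the right in a single chain; the bookkeeping is identical.
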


\begin{proof}
Since $S$ is an anti-algebra and anti-coalgebra map and $\tau^n=1$ by hypothesis,
\begin{eqnarray*}
  S\big(x^{[n,\tau]}\big) &=& S\left(\sum_x x_1 (\tau\cdot x_2) (\tau^2\cdot x_3)\cdots
    (\tau^{n-1}\cdot x_n ) \right) \\
   & = & \sum_x (\tau^{n-1} \cdot S(x_n)) (\tau^{n-2}\cdot S(x_{n-1})) \cdots (
   \tau^2\cdot S(x_3))(\tau\cdot S(x_2)) S(x_1)\\
  &=& \sum_x (\tau^{-1} \cdot S(x_n)) (\tau^{-2} \cdot S(x_{n-1})) \cdots 
  (\tau^{-1(n-2)} \cdot S(x_3)) (\tau^{-1 (n-1)} \cdot S(x_2)) S(x_1)\\
   &=& \tau^{-1}\cdot \left( \sum_x S(x_n) (\tau^{-1}\cdot S(x_{n-1})) \cdots (\tau^{-1(n-3)}
     \cdot S(x_3)) (\tau^{-1(n-2)} \cdot S(x_2)) (\tau^{-1(n-1)} \cdot S(x_1))\right)\\
   &=& \tau^{-1} \cdot \big(S(x)^{{[ n, \tau^{-1} ] }}\big) .
\end{eqnarray*}
\end{proof}

\begin{cor}\label{inverse} 
Let $H$ be a Hopf algebra for which $S^2=id$ and let $\tau$ be a Hopf automorphism of $H$.
Then 
$  exp_{\tau^{-1}}(H)= exp_{\tau}(H). $
\end{cor}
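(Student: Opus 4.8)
The plan is to show that the defining conditions for $exp_{\tau}(H)$ and $exp_{\tau^{-1}}(H)$ single out exactly the same set of integers $n$, whence their least elements (if they exist) must agree. First I would observe that $\tau$ and $\tau^{-1}$ have the same order, so the divisibility constraint ``$n$ is a multiple of the order of $\tau$'' appearing in Definition~\ref{expdef} is identical for the two automorphisms; only the twisted-power conditions $x^{[n,\tau]}=\eps(x)1$ and $x^{[n,\tau^{-1}]}=\eps(x)1$ need to be matched.

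The heart of the argument is Lemma~\ref{S-exp}. Fix $n$ a multiple of the order of $\tau$ and suppose $x^{[n,\tau]} = \eps(x)1$ for every $x\in H$. Applying $S$ and using $S(\eps(x)1)=\eps(x)S(1)=\eps(x)1$, Lemma~\ref{S-exp} gives $\tau^{-1}\cdot\big(S(x)^{[n,\tau^{-1}]}\big)=\eps(x)1$ for all $x$. Now applying the algebra automorphism $\tau$, which fixes $1$, yields $S(x)^{[n,\tau^{-1}]}=\eps(x)1$. Since $S^2=\id$, the map $S$ is a bijection of $H$, so I set $y=S(x)$, whence $x=S(y)$, and rewrite the identity as $y^{[n,\tau^{-1}]}=\eps(S(y))1=\eps(y)1$ for all $y\in H$, using the standard Hopf identity $\eps\circ S=\eps$. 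Thus every $n$ admissible for $\tau$ is admissible for $\tau^{-1}$.

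Finally I would invoke symmetry: replacing $\tau$ by $\tau^{-1}$ throughout (and recalling $(\tau^{-1})^{-1}=\tau$) shows conversely that every $n$ admissible for $\tau^{-1}$ is admissible for $\tau$. Hence the two sets of admissible $n$ coincide; in particular either both are empty or both have the same least element, giving $exp_{\tau^{-1}}(H)=exp_{\tau}(H)$. I do not anticipate a genuine obstacle here — the calculation is a direct consequence of Lemma~\ref{S-exp} — and the only points requiring care are the bookkeeping with $S(1)=1$ and $\eps\circ S=\eps$ together with the bijectivity of $S$ that permit the reindexing, plus the remark that equality of the admissible sets also settles the degenerate case in which neither exponent exists.
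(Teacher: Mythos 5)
Your proof is correct and follows essentially the same route as the paper: the paper's (much terser) argument also deduces the equivalence $x^{[n,\tau]}=\eps(x)1 \iff S(x)^{[n,\tau^{-1}]}=\eps(x)1$ from Lemma~\ref{S-exp} together with the bijectivity of $S$ and $\tau$. Your write-up merely makes explicit the bookkeeping the paper leaves implicit (applying $\tau$ to remove the $\tau^{-1}\cdot(-)$, the substitution $y=S(x)$ with $\eps\circ S=\eps$, the matching divisibility constraint since $\tau$ and $\tau^{-1}$ have the same order, and the symmetry step).
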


\begin{proof} It is clear from Lemma \ref{S-exp} that $x^{[n,\tau]} = \eps(x)1$ $\iff$ $S(x)^{[n,\tau^{-1}]} = \eps(x)1$ 
since $\tau$  and $S$ are bijective. Thus the two twisted exponents are the same.
\end{proof}

\begin{question} {\rm We ask if Corollary \ref{inverse} is true more generally.  That is, if the order of $\tau$ is $n$ and 
$m$ is relatively prime to $n$, then is $exp_{\tau^m}(H)= 
exp_{\tau}(H)? $ }
\end{question}


\section{Modules and Frobenius-Schur indicators}\label{sec:FS1}

In this section, we assume $A$ is a semisimple Hopf algebra, and thus we may assume that $\La_A$ is a 
normalized integral, that is, $\eps(\La_A) = 1$. Then the integral $\La_K = \La_A \nat p_1$ of $K= A\nat k^G$ 
is a normalized integral of $K$. 

For any (left) $K$-module $M$, we may write 
$$  M = \bigoplus_{x\in G} M_x$$
where $M_x = p_x \cdot M$ is a $K$-submodule of $M$ for each $x\in G$. 
Note that each $M_x$ is also an $A$-module, by restricting the action to $A$. 

Let $\nu^K_m$ denote the $m$th {\em Frobenius-Schur indicator} for 
$K$-modules as in \cite{KSZ}, and let $\nu^A_{m,x}$ denote the $m$th {\em  twisted Frobenius-Schur
indicator} for $A$-modules, twisted by $x$, as in \cite{SV}. 
That is, if $V$ is a $K$-module with character (or trace function) $\chi_V$, then
$$
   \nu^K_m (V) = \chi_V (\La_K ^{[m]}).
$$
 If $W$ is an $A$-module with character $\chi_W$ and $x$ is an
automorphism of $A$ whose order divides $m$, then 
$$
   \nu_{m,x}^A (W) = \chi_W (\La_A^{[m,x]}).
$$ 
See \cite{SV} for general results on twisted indicators and for
computations of $\nu^A_{m,x}$ when $A = H_8$, the smallest semisimple
noncommutative, noncocommutative Hopf algebra. 

Our next theorem gives a relationship between the Frobenius-Schur indicators
of $K$ and the twisted Frobenius-Schur indicators of $A$. 

\begin{thm}\label{twisted-indicator1}  
For every $K$-module $M$,
$$
  \nu^K _m (M) = \sum_{x\in G, \ x^m=1} \nu^A_{m,x^{-1}}(M_x) .
$$
\end{thm}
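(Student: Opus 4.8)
The plan is to reduce the Frobenius--Schur indicator of $K$ to a sum of twisted indicators for $A$ by exploiting the decomposition $M = \bigoplus_{x\in G} M_x$ together with the explicit formula for $\Lambda_K^{[m]}$ provided by Lemma~\ref{twisted-powers}. The key point is that the character $\chi_M$ of $M$ as a $K$-module splits along the idempotents $p_x$, and the Hopf power $\Lambda_K^{[m]}$ is supported only on those group elements $z$ with $z^m = 1$.

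First I would write $\nu^K_m(M) = \chi_M(\Lambda_K^{[m]})$ directly from the definition of the Frobenius--Schur indicator. Then I would substitute the second formula of Lemma~\ref{twisted-powers}, namely
$$
\Lambda_K^{[m]} = \sum_{z\in G,\ z^m=1} \Lambda_A^{[m,z]} \nat p_{z^{-1}},
$$
so that by linearity of $\chi_M$,
$$
\nu^K_m(M) = \sum_{z\in G,\ z^m=1} \chi_M\big(\Lambda_A^{[m,z]} \nat p_{z^{-1}}\big).
$$
The central computation is then to evaluate $\chi_M(\Lambda_A^{[m,z]} \nat p_{z^{-1}})$. Since $M = \bigoplus_{x\in G} M_x$ with $M_x = p_x\cdot M$, and the element $a\nat p_{z^{-1}}$ acts as $a$ on the summand $M_{z^{-1}}$ and as zero on the other summands (because $p_{z^{-1}}$ projects onto $M_{z^{-1}}$ and $p_x p_{z^{-1}} = \delta_{x,z^{-1}} p_{z^{-1}}$), the trace over all of $M$ collapses to the trace over $M_{z^{-1}}$ of the action of $\Lambda_A^{[m,z]}$. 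That is, $\chi_M(\Lambda_A^{[m,z]}\nat p_{z^{-1}}) = \chi_{M_{z^{-1}}}(\Lambda_A^{[m,z]})$, which by definition of the twisted indicator for $A$-modules equals $\nu^A_{m,z}(M_{z^{-1}})$.

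Finally I would reindex the sum by setting $x = z^{-1}$ (so $z = x^{-1}$ and the condition $z^m = 1$ becomes $x^m = 1$) to obtain
$$
\nu^K_m(M) = \sum_{x\in G,\ x^m=1} \nu^A_{m,x^{-1}}(M_x),
$$
as claimed. I expect the main obstacle to be the careful bookkeeping in the trace computation: one must verify that $\Lambda_A^{[m,z]}\nat p_{z^{-1}}$ genuinely acts as the plain $A$-element $\Lambda_A^{[m,z]}$ on $M_{z^{-1}}$ and annihilates every other block, so that the $K$-trace really does restrict cleanly to an $A$-trace on a single summand. There is also a subtlety worth checking, that $z$ having order dividing $m$ (guaranteed by $z^m=1$) is exactly the hypothesis needed for $\nu^A_{m,z}$ to be defined as a twisted indicator, so the index condition in Lemma~\ref{twisted-powers} matches the domain of validity of the twisted indicator definition.
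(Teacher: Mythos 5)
Your proposal is correct and follows essentially the same route as the paper: both arguments hinge on Lemma~\ref{twisted-powers} together with the observation that $a \nat p_{z^{-1}}$ acts as $a$ on the block $M_{z^{-1}}$ and annihilates the other summands, so the $K$-trace collapses to an $A$-trace on a single block. The only difference is bookkeeping order: the paper decomposes $M=\bigoplus_{x\in G}M_x$ first and computes $\nu^K_m(M_x)$ blockwise, while you expand $\Lambda_K^{[m]}$ first and reindex $x=z^{-1}$ at the end---the same computation with the two summations interchanged.
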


\begin{proof}
Write $M= \oplus_{x\in G} M_x$ as before.
Then $\nu^K_m(M) = \sum_{x\in G} \nu^K_m(M_x),$ and we will now compute
$\nu^K_m(M_x)$ for an element $x$ of $G$, writing $\La = \La_A$ for
ease of notation: By Lemma~\ref{twisted-powers}, 
\begin{eqnarray*}
    \nu^K_m(M_x) & = & \chi_{M_x} (\La_K ^{[m]} ) \\
      &=& \chi_{M_x} \big(\sum_{z\in G, \ z^m=1} \La ^{[m,z]} \nat p_{z^{-1}}\big) \\
     &=& \delta_{x^m, 1} \chi_{M_x} (\La^{[m,x^{-1}]}) 
    \ \ = \ \  \delta_{x^m,1} \nu^A_{m,x^{-1}} (M_x).
\end{eqnarray*}
Summing over all elements of $G$, we obtain the stated formula.
\end{proof}

As a consequence, for example, if $x$ is an element of $G$ of order $n$ and
$M$ is a $K$-module for which $M= M_x$ (i.e.\ $M_y=0$ for all $y\neq x$),
then
$\nu^K_m(M) = 0$ for all $m<n$. 

In our next result, we show that a twisted Frobenius-Schur indicator 
may always be realized as a Frobenius-Schur indicator for a smash coproduct.
Let $\tau$ be any Hopf automorphism of $A$ of finite order $n$,
and let $G=\langle \tau\rangle $ be the cyclic subgroup of the automorphism group 
generated by $\tau$. Set $K= A\nat k^G$.

\begin{thm} For any $A$-module $N$, extend $N$ to be a $K$-module $M$ 
by 
letting $M_{\tau^{-1}} = N$ and $M_{x} = 0$ for all $x\in G$, $x\neq \tau^{-1}$. 
Then for every positive integer multiple $m$ of $n$, 
$$
  \nu^A_{m,\tau}(N) = \nu^K_{m} (M). 
$$
Thus every value of a twisted indicator for $A$ is the value of an ordinary indicator for 
a smash coproduct over $A$.  

\end{thm}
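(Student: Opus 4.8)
The plan is to derive this statement directly from Theorem~\ref{twisted-indicator1}, since it is essentially a specialization of that formula to a module concentrated in a single graded component. First I would verify that the prescription $M_{\tau^{-1}} = N$, $M_x = 0$ otherwise, really does define a $K$-module. Because $K = A\ot k^G$ is an ordinary tensor product of algebras, a $K$-module is the same as a pair of commuting actions of $A$ and of $k^G$; the idempotents $\{p_x\}$ decompose any $k^G$-module into isotypic pieces, so it suffices to place all of $N$ in the $p_{\tau^{-1}}$-isotypic component and let $A$ act as it does on $N$. Since elements of $A$ and $k^G$ commute in $K$, this is consistent, and the resulting graded decomposition $M = \bigoplus_{x\in G} M_x$ is exactly the one required.

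With $M$ in hand, I would apply Theorem~\ref{twisted-indicator1}, which gives
$$
\nu^K_m(M) = \sum_{x\in G, \ x^m=1} \nu^A_{m,x^{-1}}(M_x).
$$
Here $G=\langle\tau\rangle$ has order $n$, and $m$ is by hypothesis a multiple of $n$; hence $x^m=1$ for every $x\in G$, so the constraint $x^m=1$ imposes no restriction and the sum runs over all of $G$. The key observation is then that $M_x=0$ for every $x\neq\tau^{-1}$, so every term of the sum vanishes except the one indexed by $x=\tau^{-1}$. That surviving term is $\nu^A_{m,(\tau^{-1})^{-1}}(M_{\tau^{-1}}) = \nu^A_{m,\tau}(N)$, which is exactly the claimed equality.

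The only point requiring care --- what I regard as the crux of the bookkeeping rather than a genuine obstacle --- is the choice to set $M_{\tau^{-1}}=N$ rather than $M_{\tau}=N$. Because the twist in Theorem~\ref{twisted-indicator1} enters as $x^{-1}$, producing the indicator $\nu^A_{m,\tau}$ forces $x^{-1}=\tau$, i.e.\ $x=\tau^{-1}$. Getting this inversion right is what makes the twist come out as $\tau$ and not $\tau^{-1}$; everything else is a direct substitution into the previous theorem. The concluding sentence of the statement, that every twisted indicator value for $A$ arises as an ordinary indicator value for a smash coproduct, is then immediate, since $N$ was arbitrary.
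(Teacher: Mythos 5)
Your proposal is correct and follows essentially the same route as the paper: both proofs are a direct specialization of Theorem~\ref{twisted-indicator1}, observing that since $m$ is a multiple of $n=|\tau|$ the condition $x^m=1$ is vacuous and only the $x=\tau^{-1}$ term survives, yielding $\nu^A_{m,\tau}(N)$. Your added check that $M$ is genuinely a $K$-module (commuting $A$- and $k^G$-actions) and your remark on why the twist forces $M_{\tau^{-1}}=N$ rather than $M_\tau=N$ are sensible clarifications of points the paper leaves implicit, but they do not change the argument.
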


\begin{proof}
By Theorem~\ref{twisted-indicator1}, 
$$
   \nu^K_m(M)  =  \sum_{x\in G, \ x^m=1} \nu_{m,x^{-1}}^A (M_x) 
    = \nu^A_{m,\tau} (M_{\tau^{-1}})  =  \nu^A_{m,\tau}(N).$$
\end{proof} 

\begin{example} {\rm We illustrate the theorem using a non-trivial automorphism of $A = H_8,$ the Kac-Palyutkin 
algebra of dimension 8 which is neither commutative nor 
cocommutative. The Hopf automorphism group was found in \cite{SV}, Section 4.2. Let $A$ be generated by $x, y, z$ with the usual relations $x^2 = y^2 = 1$, $z^2 = {\frac{1}{2}}(1 + x+ y- xy)$, $xy=yx$, $xz = zy$ and $yz = zx$, where $x, y$ are group-like  and $\Delta (z) = {\frac{1}{2}}(1 \ot 1 + 1 \ot x+ y\ot 1- y\ot x)(z\ot z).$

Let $\tau= \tau_4$ be the automorphism of $A$ 
of order 2 that interchanges $x$ and $y$ and sends $z$ to 
${\frac{1}{2}}(-z+xz+ yz +xyz),$ and let $\chi$ be the character of the unique two-dimensional simple module $N$ of $A$. Then from \cite{SV}, 
$\nu_{2, \tau}^A(N) = -1$. 

Letting $G = \langle \tau \rangle$ and $K = A \nat k^G$,  $N$ becomes a $K$-module $M$ by setting $M_{\tau} = N$ and $M_1 = 0$. Then $\nu_2^K(M) = -1$.

}
\end{example}


\section{ Frobenius-Schur indicators for non-semisimple Hopf algebras}\label{sec:FS2} 

Let $A$ be a finite-dimensional Hopf algebra that is not necessarily semisimple 
and for which $S^2$ is not necessarily the identity map. 
When $A$ is not semisimple, there does not exist a normalized integral, and so we 
cannot use the definition of indicator from the previous section. Instead we extend the work in \cite{KMN} 
and define twisted Frobenius-Schur indicators for $A$ itself 
and obtain connections to Frobenius-Schur indicators of smash coproducts. 
Fix $\tau$, a Hopf automorphism of $A$ whose order
divides the positive integer $m$. 
We define a variant of the 
$m$th twisted Hopf power map of $A$ to be  
$P_{m-1,\tau}: A \rightarrow A$,  given by 
$$
   P_{m-1,\tau} (a) = \sum_a (\tau^{m-1}\cdot a_1)(\tau^{m-2}\cdot a_2)\cdots
     (\tau^2\cdot a_{m-2}) (\tau\cdot a_{m-1})
$$ 
for all $a\in A$. 
We will use this map to define twisted Frobenius-Schur indicators, and then we 
will show how it relates to the twisted Hopf power maps defined in Section~\ref{sec:exp},
by giving equivalent definitions of twisted Frobenius-Schur indicators 
in Theorem~\ref{integral} and Corollary~\ref{RRLLformula}. 

The $m$th {\em twisted Frobenius-Schur indicator} of $A$ is 
$$
   \nu_{m,\tau}(A) : = \Tr ( S \circ P_{m-1, \tau}),
$$
the trace of the map $S\circ P_{m-1,\tau}$ from $A$ to $A$,
where $S$ is the antipode of $A$.

We choose this definition as it 
specializes to the definition of the Frobenius-Schur indicator of the regular representation $A$ 
for an arbitrary finite-dimensional Hopf algebra in \cite{KMN} when $\tau$ is the identity, and also to the definition of twisted Frobenius-Schur indicators in the semisimple case given in \cite[Theorem 5.1]{SV}. The 
indicator of the regular representation has also been considered in \cite{Sh}.

The following theorem generalizes part of \cite[Theorem~2.2]{KMN}. 

\begin{thm}\label{integral}
Let $\Lambda$ be a left integral of $A$ and $\lambda$ a right
integral of $A^*$ for which $\lambda(\Lambda)=1$.
Then
$$
   \nu_{m,\tau}(A) = \lambda ( S ( \La )^{[m,\tau]} ) . 
$$
\end{thm}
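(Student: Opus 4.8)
The plan is to reduce the statement to a single application of the integral trace formula for finite-dimensional Hopf algebras that underlies the proof of \cite[Theorem~2.2]{KMN} (originally due to Radford). That formula expresses the trace of an arbitrary linear map $f\colon A\to A$ as one evaluation of the right integral $\lambda$ on a product built from the coproduct components of the left integral $\Lambda$; in the variant used in \cite{KMN} it reads $\Tr(f)=\sum_{(\Lambda)}\lambda\big(S(\Lambda_2)\,f(\Lambda_1)\big)$, where $\lambda(\Lambda)=1$. Applying this with $f=S\circ P_{m-1,\tau}$ turns the computation of $\nu_{m,\tau}(A)=\Tr(S\circ P_{m-1,\tau})$ into a purely algebraic identity inside $A$: I must show that the element obtained by feeding the appropriate coproduct components of $\Lambda$ into $S\circ P_{m-1,\tau}$ and prepending $S(\Lambda_2)$ is exactly $S(\Lambda)^{[m,\tau]}$, after which applying $\lambda$ gives the claim.

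First I would expand, by coassociativity, $\Delta^{(m-1)}(\Lambda)=\sum \Lambda_{(1)}\otimes\cdots\otimes\Lambda_{(m)}$ and split off the last tensorand to match the two slots of the trace formula, so that $S(\Lambda_2)=S(\Lambda_{(m)})$ while $\Lambda_1$ supplies the first $m-1$ components entering $P_{m-1,\tau}$. Next, using that $S$ is simultaneously an anti-algebra and an anti-coalgebra map and that the Hopf automorphism $\tau$ commutes with $S$, I would push $S$ through $P_{m-1,\tau}$. This reverses the order of the $m-1$ factors and turns the falling powers $\tau^{m-1},\ldots,\tau$ into rising powers $\tau,\ldots,\tau^{m-1}$, giving $S\circ P_{m-1,\tau}(\Lambda_1)=\sum \big(\tau\cdot S(\Lambda_{(m-1)})\big)\big(\tau^2\cdot S(\Lambda_{(m-2)})\big)\cdots\big(\tau^{m-1}\cdot S(\Lambda_{(1)})\big)$. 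Prepending the leftover factor $S(\Lambda_{(m)})=\tau^0\cdot S(\Lambda_{(m)})$ then produces exactly $\big(\tau^0\cdot S(\Lambda_{(m)})\big)\big(\tau\cdot S(\Lambda_{(m-1)})\big)\cdots\big(\tau^{m-1}\cdot S(\Lambda_{(1)})\big)$, which by the anti-coalgebra identity $(S\Lambda)_i=S(\Lambda_{(m+1-i)})$ is precisely the twisted Hopf power $S(\Lambda)^{[m,\tau]}$ of Section~\ref{sec:exp}.

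The main obstacle I anticipate is the index bookkeeping that ties together three separate order conventions: the reversal produced by the anti-homomorphism $S$, the shift of the $\tau$-exponents, and the off-by-one between the $m-1$ factors of $P_{m-1,\tau}$ and the $m$ factors of $S(\Lambda)^{[m,\tau]}$ (the missing factor being exactly the $S(\Lambda_2)$ contributed by the trace formula). It is essential to use the variant of the trace formula whose surviving $S(\Lambda_2)$ term lands on the correct (left) side with exponent $\tau^0$, since $\lambda$, being only a right integral, is not a symmetric functional, and a different placement would yield a superficially different element that need not agree with $S(\Lambda)^{[m,\tau]}$. The remaining points are routine: the hypothesis that the order of $\tau$ divides $m$ guarantees $\tau^m=\id$ is invoked only where legitimate, and the coproduct split is justified by coassociativity. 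As a consistency check one can note that the identity of Lemma~\ref{S-exp} relates $S(\Lambda)^{[m,\tau]}$ to $\Lambda^{[m,\tau^{-1}]}$ through $\tau$ and $S$, in agreement with the reindexing above.
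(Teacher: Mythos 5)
Your proposal is correct and takes essentially the same route as the paper's proof: the paper likewise applies Radford's trace formula $\Tr(f)=\sum\lambda\big(S(\Lambda_2)\,f(\Lambda_1)\big)$ (citing \cite{R}) with $f=S\circ P_{m-1,\tau}$, pushes $S$ through $P_{m-1,\tau}$ as an anti-algebra map commuting with $\tau$, and then uses the anti-coalgebra property of $S$ to reindex the resulting product as $S(\Lambda)^{[m,\tau]}$. Your bookkeeping, including splitting $\Delta^{(m-1)}(\Lambda)$ so that the leftover factor $S(\Lambda_{(m)})$ lands on the left with exponent $\tau^0$, matches the paper's computation exactly.
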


\begin{proof}
By \cite[Theorem~1]{R},  
\begin{eqnarray*}
   \Tr (S\circ P_{m-1,\tau}) & = & \sum \lambda ( S(\Lambda_2) S\circ P_{m-1,\tau}
     (\Lambda_1))\\
    & = & \sum \lambda ( S(\Lambda_m) S((\tau^{m-1}\cdot \Lambda_1) (\tau^{m-2}\cdot\Lambda_2)
     \cdots (\tau\cdot\Lambda_{m-1}) )\\
   &=& \sum \lambda ( S(\La_m) (\tau \cdot S(\La_{m-1})) \cdots (\tau^{m-1} \cdot S(\La_1)))\\
   &=& \sum \lambda (S(\La)_1 (\tau\cdot S(\La)_2) \cdots (\tau^{m-1} \cdot S(\La)_m) ) \ \
   = \ \  \lambda (S(\La )^{[m,\tau]} ) .
\end{eqnarray*}\end{proof}

A similar  proof to that of \cite[Corollary~2.6]{KMN} yields the following
result that will be useful for computations. 

\begin{cor}\label{RRLLformula}
Let $\La _r$ be a right integral of $A$ and $\lambda _r$ be a right integral
of $A^*$ for which $\lambda _r (\Lambda _r) =1$. Then 
$$
   \nu_{m,\tau} (A) = \lambda _r(\La _r^{[m,\tau]}).
$$
Similarly let $\Lambda _l$ be a left integral of $A$ and $\lambda _l$ be a left integral
of $A^*$ for which $\lambda _l(\Lambda _l) =1$. Then
$$
   \nu_{m,\tau}(A) = \lambda_l ( \tau^{-1} \cdot \Lambda _l^{[m,\tau^{-1}]}).
$$
\end{cor}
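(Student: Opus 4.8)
The plan is to reduce both formulas to Theorem~\ref{integral}, since that result already expresses $\nu_{m,\tau}(A)$ as $\lambda(S(\Lambda)^{[m,\tau]})$ for a left integral $\Lambda$ and a right integral $\lambda$ of $A^*$ normalized by $\lambda(\Lambda)=1$. The key structural fact I would invoke is the standard relationship between left and right integrals via the antipode: if $\Lambda$ is a left integral of $A$, then $S(\Lambda)$ is a right integral of $A$, and dually $S(\lambda)$ relates the left and right integrals of $A^*$. Thus the natural first step is to set $\Lambda_r = S(\Lambda)$ in the first formula, so that $S(\Lambda)^{[m,\tau]} = \Lambda_r^{[m,\tau]}$ appears directly; the remaining task is to check that the functional $\lambda$ occurring in Theorem~\ref{integral} is a right integral of $A^*$ with the correct normalization against $\Lambda_r$. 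Here I would verify that $\lambda(\Lambda_r) = \lambda(S(\Lambda))$ can be arranged to equal $1$, matching the normalization $\lambda_r(\Lambda_r)=1$ in the statement, possibly after rescaling.

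For the second formula, involving a left integral $\Lambda_l$ of $A$ and a left integral $\lambda_l$ of $A^*$, the plan is to pass from the right-handed data to left-handed data using the antipode identities. The natural approach is to apply Lemma~\ref{S-exp}, which computes $S(x^{[m,\tau]}) = \tau^{-1}\cdot(S(x)^{[m,\tau^{-1}]})$. Starting from $\nu_{m,\tau}(A) = \lambda_r(\Lambda_r^{[m,\tau]})$ and writing $\Lambda_r = S(\Lambda_l)$ (so that $S$ converts the left integral $\Lambda_l$ into a right integral), I would use Lemma~\ref{S-exp} to rewrite $\Lambda_r^{[m,\tau]} = S(\Lambda_l)^{[m,\tau]}$ in terms of $S(\Lambda_l^{[m,\tau^{-1}]})$ and a twist by $\tau^{-1}$. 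Applying $\lambda_r$ and using that $\lambda_r \circ S = \lambda_l$ (up to normalization, since $S$ converts right integrals of $A^*$ to left integrals) should produce the expression $\lambda_l(\tau^{-1}\cdot \Lambda_l^{[m,\tau^{-1}]})$, as claimed.

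The main obstacle I anticipate is bookkeeping the normalization constants and the precise left/right integral correspondences under $S$: in a non-semisimple Hopf algebra the distinguished grouplike element (the modular function) can intervene when converting between left and right integrals via the antipode, so I would need to confirm that these modular factors cancel in the trace or are absorbed into the normalization conditions $\lambda_r(\Lambda_r)=1$ and $\lambda_l(\Lambda_l)=1$. Since the corresponding untwisted statement is \cite[Corollary~2.6]{KMN}, the cleanest route is to follow that proof line by line, inserting the automorphism $\tau$ at each twisted power and tracking how Lemma~\ref{S-exp} converts a $\tau$-twist into a $\tau^{-1}$-twist together with the outer action by $\tau^{-1}$. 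The twisted case should introduce no genuinely new difficulty beyond carrying the $\tau$-exponents correctly through each integral manipulation.
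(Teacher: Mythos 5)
Your proposal follows essentially the same route as the paper: the first formula is obtained from Theorem~\ref{integral} by setting $\Lambda_r = S(\Lambda_l)$, and the second by combining Theorem~\ref{integral} with Lemma~\ref{S-exp}, the commutation $S\circ\tau^{-1} = \tau^{-1}\circ S$, and $\lambda_l := \lambda_r\circ S$ (the paper runs this chain from $\lambda_l(\tau^{-1}\cdot\Lambda_l^{[m,\tau^{-1}]})$ back to $\lambda_r(\Lambda_r^{[m,\tau]})$, but the ingredients are identical). The one point where you hedge --- arranging $\lambda_r(S(\Lambda_l))=1$ ``possibly after rescaling'' --- deserves a caveat: rescaling alone cannot fix this (scaling the pair changes $\lambda_r(\Lambda_r^{[m,\tau]})$ by the same factor), and the paper instead invokes the identity $\lambda_r(S(\Lambda_l)) = \lambda_r(\Lambda_l)$, which is exactly the fact that makes the normalizations compatible; once that is cited, your modular-function worry evaporates, since no distinguished grouplike element enters the argument.
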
 

\begin{proof}
The first statement follows immediately from Theorem~\ref{integral} 
and the fact that if
$\Lambda _l$ is a left integral, then $\Lambda_r := S(\Lambda _l)$ is a right integral,
and the value of $\lambda _r$ on each is the same. 

For the second statement, if $\lambda _r$ is a right integral, let $\lambda_l := \lambda _r\circ S$, a left
integral of $A^*$. Then again by Theorem~\ref{integral} and also Lemma~\ref{S-exp},
\begin{eqnarray*}
   \lambda_l (\tau^{-1} \cdot \Lambda _l^{[m,\tau^{-1}]}) & = & 
     \lambda _r(S(\tau^{-1} \cdot \Lambda _l^{[m,\tau^{-1}]})) \\
  &=& \lambda _r(\tau^{-1}\cdot (S(\Lambda _l^{[m,\tau^{-1}]} )))\\
     &=& \lambda_r (S(\La _l)^{[m,\tau]}) \ \ 
   = \ \  \lambda_r (\Lambda_r ^{[m,\tau]}).
\end{eqnarray*}
\end{proof} 

Now let $G$ be a group of Hopf algebra automorphisms of $A$, as in Section~\ref{sec:prelim}.
The next result is a connection between twisted indicators of $A$
and indicators of the smash coproduct $K= A\nat k^G$. 

\begin{thm}
$ \ \  \nu_m(K) = \displaystyle{\sum_{g\in G, \ g^m=1}
   \nu_{m,g} (A)} . $
\end{thm}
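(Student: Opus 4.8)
The plan is to compute the Frobenius-Schur indicator $\nu_m(K)$ of the regular representation of $K = A \nat k^G$ directly from its definition as $\Tr(S_K \circ P_{m-1,\id}^K)$ (equivalently, via the integral formula of Theorem~\ref{integral} applied to $K$), and then decompose the resulting trace according to the idempotent decomposition $1_{k^G} = \sum_{z\in G} p_z$. The key structural input is that $K$ as an algebra is $A \ot k^G$, so $K = \bigoplus_{z\in G} (A \nat p_z)$ as a direct sum of subspaces indexed by $G$, and a linear endomorphism of $K$ has trace equal to the sum of the traces of its ``diagonal blocks'' on these subspaces. So the first step is to identify how the map $S_K \circ P_{m-1,\id}^K$ acts on each summand $A \nat p_z$ and, crucially, when it maps that summand back into itself (contributing to the trace) versus into a different summand (contributing nothing).

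**First I would** use the integral formula. Since $\Lambda_K = \Lambda_A \nat p_1$ is a left integral of $K$, Theorem~\ref{integral} gives $\nu_m(K) = \lambda_K(S_K(\Lambda_K)^{[m]})$ for a suitable right integral $\lambda_K$ of $K^*$. Here the untwisted power $[m]$ is used because $K$ carries no twisting automorphism. The engine is then Lemma~\ref{twisted-powers}: applied to $S_K(\Lambda_K)$, it converts the \emph{untwisted} $m$th Hopf power in $K$ into a sum over $z\in G$ with $z^m = $ (the relevant group element) of \emph{$z$-twisted} powers $\Lambda_A^{[m,z]}$ in $A$, each tagged by an idempotent $p_{z^{-1}}$ (or $p_z$). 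The second displayed formula in Lemma~\ref{twisted-powers} already gives $\Lambda_K^{[m]} = \sum_{z^m=1} \Lambda_A^{[m,z]} \nat p_{z^{-1}}$, and one needs the analogous expansion after applying $S_K$, where $S_K(\Lambda_K) = (1\cdot S(\Lambda_A)) \nat p_1 = S(\Lambda_A)\nat p_1$ is again supported on $p_1$, so the same constraint $z^m=1$ emerges.

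**Next I would** factor $\lambda_K$ as a tensor product: a right integral of $K^* = A^* \ot (k^G)^* \cong A^* \ot kG$ should be $\lambda_r^A \ot (\text{counit-type functional})$, so that evaluating $\lambda_K$ on the term $\Lambda_A^{[m,z]}\nat p_{z^{-1}}$ picks out $\lambda_r^A(\Lambda_A^{[m,z]})$ times a scalar that is nonzero only for the correct idempotent. By Corollary~\ref{RRLLformula}, $\lambda_r^A(\Lambda_A^{[m,z]}) = \nu_{m,z}(A)$ (using the right-integral form, with $\Lambda_A$, $\lambda_r^A$ normalized so that $\lambda_r^A(\Lambda_A)=1$). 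Summing over the surviving $z$ with $z^m=1$ yields exactly $\sum_{g^m=1}\nu_{m,g}(A)$, after a possible reindexing $g = z$ or $g = z^{-1}$; Corollary~\ref{inverse}-type symmetry, or simply that $\{z : z^m=1\} = \{z^{-1} : z^m=1\}$, reconciles any inversion.

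**The main obstacle** will be bookkeeping the integrals and antipodes on the tensor factor $k^G$ correctly: I must verify that $\lambda_K$ factors as claimed and that its evaluation against $p_{z^{-1}}$ produces precisely the indicator $\sum_z$ of the constraint $z^m=1$ with no spurious scalar factors, and that the normalization $\lambda_K(\Lambda_K)=1$ is consistent with the chosen $A$-level normalization $\lambda_r^A(\Lambda_A)=1$. A secondary subtlety is that $A$ is no longer assumed semisimple and $S_A^2\neq\id$ in general, so I must take care that Lemma~\ref{twisted-powers}, Corollary~\ref{RRLLformula}, and the integral formula all apply in the non-semisimple setting with the correct left/right integral conventions, and that the reindexing between the $z$-twisted power $\Lambda_A^{[m,z]}$ appearing in the expansion and the form $\nu_{m,g}(A)$ required by Corollary~\ref{RRLLformula} matches exactly. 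Once the $k^G$-tagging is pinned down, the computation collapses to a single summation and the formula follows.
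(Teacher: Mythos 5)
Your proposal is correct and takes essentially the same route as the paper's proof: both rest on the integral characterization of the indicator, the expansion of the $m$th Hopf power in $K$ via Lemma~\ref{twisted-powers}, the factorization $\lambda_{K^*} = \lambda \otimes \big(\sum_{z\in G} z\big)$, and the closure of $\{g\in G : g^m=1\}$ under inversion for the final reindexing. The only cosmetic difference is order of operations: you apply $S_K$ to $\Lambda_K$ first (getting the right integral $S(\Lambda)\nat p_1$, so Corollary~\ref{RRLLformula} finishes directly), whereas the paper takes the power $\Lambda_K^{[m]}$ first and then uses Lemma~\ref{S-exp} to move $S_K$ past the twisted power --- the two computations agree by that same lemma.
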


\begin{proof}
Note that $\Lambda_K = \Lambda \nat  p_1$ and 
$\lambda_{K^*} = \lambda \ot (\sum_{z\in G} z)$
(since e.g.\ $\varepsilon( z\cdot a) = \varepsilon(a)$).
By \cite[Theorem 2.2]{KMN} and our Lemmas~\ref{twisted-powers} and ~\ref{S-exp}, 
\begin{eqnarray*}
  \nu_m(K) & = & \lambda_{K^*} (S_K (\Lambda_K^{[m]} )) \\
   & = & \left(\lambda\ot \big(\sum_{z\in G} z\big)\right) 
       \left(S_K \big(\sum_{g\in G, \ g^m=1} 
        \Lambda^{[m,g]}   \ot p_{g^{-1}}\big)\right)\\
   & = & \left(\lambda\ot \big(\sum_{z\in G}z \big)\right) 
    \left(S_K\big(\sum_{g\in G, \ g^m=1}
   \Lambda_1 (g\cdot\Lambda_2) \cdots (g^{m-1}\cdot\Lambda_m)\big) \ot  p_{g^{-1}}\right)\\
  & = & \sum_{g\in G, \ g^m=1} \lambda ( g\cdot S(\Lambda_1 (g\cdot \Lambda_2)\cdots
    (g^{m-1} \cdot\Lambda_m)))\\
    & = & \sum_{g\in G, \ g^m=1} \lambda ( S(\Lambda)^{[m,g^{-1}]} )\\
   & = & \sum_{g\in G, \ g^m=1} \nu_{m, g^{-1}} (A) \ \ 
    = \ \ \sum_{g\in G, \ g^m=1} \nu_{m, g } (A) .
\end{eqnarray*} 
\end{proof} 
   
In the next section, 
 we compute an example, a non-semisimple Hopf algebra of dimension~8 and its Hopf automorphism group. 
 

\section{A non-semisimple example}\label{sec:example}

Let $A$ be the Hopf algebra defined as  
$$
A=k\langle g, x, y \ | \ gx=-xg, \ gy=-yg, \ xy=-yx, \ g^2=1, \ x^2=y^2=0 \rangle
$$
 with coalgebra structure  given by:
$$\Delta(g)= g \ot g, \ \varepsilon(g)=1, \  S(g)=g,$$
$$
\Delta(x)= x \ot g + 1 \ot x, \ \varepsilon(x)=0,  \ S(x)=gx,
$$
$$
\Delta(y)= y \ot g +1 \ot y, \ \varepsilon(y)=0, \  S(y)=gy.
$$
The element  $\Lambda= xy +xyg$ is both a right and left integral for $A$, and $\lambda= (xy)^*$ is both a 
right and left integral for $A^*$ such that $\lambda(\Lambda)=1$.

\begin{lemma} Let $V$ be the $k$-span of $x$ and $y$. Then $Aut(A) \cong Gl_2(V)$.
\end{lemma}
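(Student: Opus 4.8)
The plan is to show that every Hopf algebra automorphism of $A$ is determined by its restriction to $V = \Span\{x,y\}$, and that this restriction can be any invertible linear map, giving an isomorphism $Aut(A) \cong GL_2(V) = GL(V)$. First I would observe that $g$ is the unique nontrivial grouplike element of $A$: since $g^2 = 1$ and $g$ is grouplike, any Hopf automorphism $\phi$ must send $g$ to a grouplike element of order dividing $2$, and I would argue that the only grouplikes are $1$ and $g$, so $\phi(g) = g$. Next I would analyze the skew-primitive elements. The elements $x$ and $y$ are $(1,g)$-skew-primitive, i.e. $\Delta(v) = v\ot g + 1\ot v$ for $v \in V$. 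Since $\phi$ is a coalgebra map fixing $g$, it must send the space of $(1,g)$-skew-primitives to itself; this space is $V \oplus k(1-g)$ (the trivial skew-primitives being multiples of $1-g$). I would then use that $\phi$ is an algebra map with $\eps\circ\phi = \eps$ to rule out a genuine $(1-g)$ component, concluding $\phi(V) \subseteq V$, so $\phi|_V \in GL(V)$.

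Second, I would check that the assignment $\phi \mapsto \phi|_V$ is a well-defined injective group homomorphism $Aut(A) \to GL(V)$. Injectivity follows because $A$ is generated as an algebra by $g, x, y$, and $\phi(g)=g$ is forced, so $\phi$ is completely determined by $\phi|_V$. The homomorphism property is immediate from composition. Third, for surjectivity, I would take an arbitrary $T \in GL(V)$, define $\phi$ on generators by $\phi(g) = g$ and $\phi|_V = T$, and verify that this extends to a Hopf automorphism. The key verification is that $\phi$ respects all the defining relations: the anticommutation relations $gx = -xg$, $gy = -yg$, $xy = -yx$ and the nilpotency $x^2 = y^2 = 0$ are all preserved because they are linear and homogeneous in the span $V$ and because $g$ anticommutes uniformly with every element of $V$ (so any $T$-image, being a linear combination of $x,y$, still anticommutes with $g$ and squares into the relations). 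One must also confirm $\phi$ is a coalgebra map, which again holds since every element of $V$ has the same comultiplication formula $\Delta(v) = v\ot g + 1\ot v$, a formula preserved under linear combinations.

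The main obstacle I anticipate is the surjectivity step, specifically verifying that an arbitrary $T\in GL(V)$ genuinely extends to an algebra map, since $A$ is noncommutative with nontrivial relations. The potential danger is that a generic linear substitution might fail $x^2 = 0$ or $xy = -yx$; for instance if $T(x) = ax + by$, then $T(x)^2 = a^2 x^2 + ab(xy+yx) + b^2 y^2 = 0$ using both $x^2=y^2=0$ and $xy=-yx$, so the relations are in fact compatible, but this needs to be checked for all three quadratic relations simultaneously. I expect this to work out cleanly precisely because the three relations $x^2 = y^2 = 0$ and $xy+yx=0$ together say that the restricted multiplication $V\ot V \to A$ factors through the exterior-square-type relation that is $GL(V)$-equivariant. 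Once surjectivity is established, combined with injectivity and the homomorphism property, I obtain the isomorphism $Aut(A)\cong GL(V) \cong Gl_2(V)$, completing the proof.
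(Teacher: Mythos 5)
Your proposal is correct and follows essentially the same route as the paper's proof: both fix $g$ (the paper via stability of the coradical $A_0 = k\langle g\rangle$, you via uniqueness of the nontrivial grouplike), deduce that the automorphism preserves the relevant skew-primitive space so that it restricts to an element of $GL(V)$, and then verify conversely that any invertible linear map on $V$ respects the defining relations and the uniform comultiplication formula on $V$. One small correction to your sketch: the counit condition $\eps\circ\phi = \eps$ alone cannot exclude a component along $1-g$ (since $\eps(1-g) = 0 = \eps(v)$ for $v\in V$); what excludes it is the algebra-map property, e.g.\ applying $\phi$ to $gx = -xg$ with $\phi(x) = v + c(1-g)$ forces $2c(g-1) = 0$, hence $c=0$ in characteristic $0$ --- a point your write-up handles more explicitly than the paper, whose proof tacitly identifies the skew-primitive spaces with $V$ and $gV$ modulo the trivial skew-primitives.
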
 

\begin{proof} This is close to the examples considered in \cite{AD}, as $A$ is pointed and generated by its group-like and skew-primitive elements. However we provide an elementary proof for completeness. 

The coradical of $A$ is given by $A_0 = k \langle g \rangle$. Any automorphism $\tau$ of $A$ stabilizes $A_0$ 
and so fixes $g$. The next term of the coradical filtration is 
$$A_1 = A_0 \oplus V \oplus gV,$$ 
since $V$ is the set of $(g, 1)$-primitives and $gV$ is the set of $(1, g)$-primitives. Consequently $V$ and $gV$ 
are each stable under the action of $\tau$. But the $\tau$-action on $V$ determines the $\tau$-action on $gV$,
and also on $A = A_1 \oplus W$, where $W$ is the span of $xy$ and $gxy$. 

Conversely it is easy to check that any invertible linear action on $V$ preserves all of the relations of $A$, and thus gives an automorphism. 
\end{proof}

For an automorphism $\tau$ of order 2 or 3, we are able to compute some values of the indicators, 
using Corollary~\ref{RRLLformula}.  
We identify $\tau$ with a matrix 
$$\begin{pmatrix}
x \\
y  \end{pmatrix}
\stackrel{\tau}{\longmapsto} \begin{pmatrix}
  a& b \\
 c & d   \end{pmatrix}  \begin{pmatrix}
x \\
y  \end{pmatrix},$$
where $a,b,c, d \in k$, such that $\Det(\tau) = ad-bc\neq0$.

\begin{prop}  Case (1). If $\tau^2 = 1$ and $m$ is even, 
then 
$\nu_{m,\tau}(A)=\frac{m^2}{2} \big( 1 + \Det (\tau) \big).$
Consequently,  
$$
\nu_{m,\tau}(A) = \begin{cases} m^2, &\mbox{if } \Det(\tau)=1\\ 
0, & \mbox{if } \Det(\tau)=-1 . \end{cases} 
$$

Case (2). If $\tau^3=1$, then 
$\nu_{3,\tau}(A)=
 \left( \Tr(\tau) + \Det(\tau)\right)^2 + \left(\Tr(\tau) + 1\right)
   \left(1 - \Det(\tau)\right).   $
Consequently,  
$$
\nu_{3,\tau}(A) = \begin{cases} 9, &\mbox{if } \tau = id \\ 
0, & \mbox{if } \tau\neq   id .\end{cases} 
$$
 \end{prop}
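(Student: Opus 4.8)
The plan is to compute $\nu_{m,\tau}(A)$ directly from Corollary~\ref{RRLLformula}, using the fact that $\Lambda = xy + xyg$ is a two-sided integral and $\lambda = (xy)^*$ is a two-sided integral with $\lambda(\Lambda)=1$. Thus I may apply the formula $\nu_{m,\tau}(A) = \lambda_r(\Lambda_r^{[m,\tau]})$ with $\Lambda_r = \Lambda$ and $\lambda_r = \lambda$, so the whole computation reduces to evaluating $(xy)^*$ on the $\tau$-twisted Hopf power $\Lambda^{[m,\tau]}$. The one quantity I really need is the coefficient of $xy$ (with no trailing $g$) appearing in $\Lambda^{[m,\tau]} = \sum_\Lambda \Lambda_1 (\tau\cdot\Lambda_2)(\tau^2\cdot\Lambda_3)\cdots(\tau^{m-1}\cdot\Lambda_m)$.

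First I would record the iterated coproduct of $\Lambda$. Since $x$ and $y$ are $(1,g)$-skew-primitive and $g$ is grouplike, I can compute $\Delta^{(n)}(x)$, $\Delta^{(n)}(y)$, and hence $\Delta^{(n)}(xy)$ and $\Delta^{(n)}(xyg)$ explicitly; each tensor factor is a monomial in $1,g,x,y,xy$ and the $\tau$-action sends $x\mapsto ax+by$, $y\mapsto cx+dy$ (and fixes $g$, $1$). The key structural observation is that in the product $\Lambda_1(\tau\cdot\Lambda_2)\cdots(\tau^{m-1}\cdot\Lambda_m)$ only a few terms can contribute to the $xy$-coefficient: because $x^2=y^2=0$ and the $x,y$ anticommute with $g$, the surviving monomials are tightly constrained, and a given slot in the tensor product contributes either a scalar (from $1$ or $g$, tracked via sign rules) or one of $x,y$ (transformed by the appropriate power of the matrix $\tau$). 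I expect the computation to collapse to a sum over the two positions where the "$x$-type" and "$y$-type" skew-primitives are placed among the $m$ slots, weighted by $\tau$-matrix entries and by signs coming from moving $g$'s past $x$'s and $y$'s.

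For Case (1), with $\tau^2=1$ and $m$ even, I would use that the powers $\tau^j$ alternate between $\id$ and $\tau$, so the matrix entries entering the sum are constant along even/odd positions; after summing the geometric-type counting over placements one should obtain a clean quadratic-in-$m$ expression, which I expect to simplify to $\tfrac{m^2}{2}(1+\Det\tau)$, with the sign contributions from $g$ producing the determinant. For Case (2), with $\tau^3=1$ and $m=3$, the three twisted factors involve $\id,\tau,\tau^2$, and the Cayley--Hamilton relation $\tau^2 = \Tr(\tau)\tau - \Det(\tau)\,\id$ lets me rewrite $\tau^2$-entries in terms of $\Tr(\tau)$ and $\Det(\tau)$, yielding the stated polynomial. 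The specialized values then follow by solving: for an order-$3$ matrix over $k$ the only solution with $\nu_{3,\tau}(A)\neq 0$ is $\tau=\id$, giving $9$, and one checks the polynomial vanishes for every genuine order-$3$ automorphism. The main obstacle will be the bookkeeping of signs produced by commuting $g$ past the odd elements $x,y$ across the $m$ tensor slots; getting these signs exactly right is what determines whether the determinant enters with the correct sign, and it is where the semisimple-case intuition does not directly transfer because here $S^2\neq\id$.
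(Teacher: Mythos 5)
Your proposal follows essentially the same route as the paper's proof: you invoke the first formula of Corollary~\ref{RRLLformula} with the same integrals $\Lambda = xy+xyg$ and $\lambda = (xy)^*$, expand $\Lambda^{[m,\tau]}$ via the iterated coproducts of the skew-primitives $x,y$, and reduce to the same parity-and-sign count over placements of $x$ and $y$ among the $m$ tensor slots, with your Cayley--Hamilton substitution for $\tau^2$ in Case~(2) being a cosmetic variant of the paper's direct computation of the matrix entries, and your final check on order-$3$ automorphisms matching the paper's Jordan-form analysis. The approach is correct.
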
 
 
\begin{proof} We verify the formulas by using the first part of  
Corollary~\ref{RRLLformula}. 

Case (1): Recall that $\Lambda = xy + xyg$ and $\lambda = (xy)^*$ are right
integrals. We must find $\lambda(\Lambda^{[m,\tau]})$. 
First we will show that $\lambda ( (xy)^{[m,\tau]}) = \frac{m^2}{4}
(1 + \Det(\tau))$, and then we will argue that $\lambda ((xyg)^{[m,\tau]})
= \lambda ((xy)^{[m,\tau]})$.
In order to find $(xy)^{[m,\tau]}$, first note that
\begin{eqnarray*}
\Delta^{m-1}(x) & = & x\ot g^{\ot^{m-1}}  + 1\ot x \ot g^{\ot^{m-2}}
     + \cdots + 1^{\ot^{i-1}}\ot x \ot g^{\ot^{m-i}} +  \cdots +  1^{\ot^{m-1}}\ot x , \\
   \Delta^{m-1}(y) & = & y \ot g^{\ot^{m-1}}  + 1\ot y \ot g^{\ot^{m-2}}
     + \cdots + 1^{\ot^{i-1}}\ot y \ot g^{\ot^{m-i}} +  \cdots +  1^{\ot^{m-1}}\ot y ,
\end{eqnarray*}
each sum consisting of $m$ terms. Set
$$
   x_1 =  x\ot g^{\ot^{m-1}} , \ldots , \ x_i = 1^{\ot^{i-1}}\ot x \ot g^{\ot^{m-i}} , 
  \ldots, \ x_m = 1^{\ot^{m-1}}\ot x ,
$$
the index indicating the position of $x$ in the tensor product, and similarly
define $y_1,y_2,\ldots,y_m$.
Letting $\mu$ denote the multiplication map, by definition we have  
$$
   (xy)^{[m,\tau]} = \mu ( (1\ot \tau)^{\ot^{\frac{m}{2}}} )
    \left(\sum_{i,j=1}^m x_i y_j\right) . 
$$
Since $\tau\cdot g = g$ and $\lambda =(xy)^*$, in computing $\lambda ( (xy)^{[m,\tau]})$,
the only terms in the above expansion of $(xy)^{[m,\tau]}$ yielding a nonzero
value of $\lambda$ are those with an even number of factors of $g$.
These are precisely the terms $x_iy_j$ for which $i,j$ have the same parity,
of which there are $\frac{m^2}{2}$ terms. 
If $i,j$ are both odd (of which there are $\frac{m^2}{4}$ pairs), then
in $(xy)^{[m,\tau]}$, the $(i,j)$ term is simply $xy$ by the following observations:
(1) $\tau$ is applied only to factors of $g$ or $1$, which are fixed by $\tau$,
(2) if $i\leq j$, there are an even number of factors of $g$ between $x$ and $y$ after
applying $\mu$, and  (3) if $i>j$, there are an odd number of factors of $g$ between
$x$ and $y$ after applying $\mu$ (since $x_i$ is to the left of $y_j$), so moving
factors of $g$ to the right, past $x$, results in a factor of $(-1)$, and then applying
the relation $yx = -xy$ results in another factor of $(-1)$, so that the end result
is a term $xy$. 
If $i,j$ are both even (of which there are $\frac{m^2}{4}$ pairs),
then in $(xy)^{[m,\tau]}$, the $(i,j)$ term is $\tau\cdot xy = \Det (\tau) xy$, by
similar reasoning. Therefore
$$
  \lambda ( (xy)^{[m,\tau]}) = \lambda \left(
    \frac{m^2}{4}  xy + \frac{m^2}{4} \Det (\tau) xy \right)
   = \frac{m^2}{4} \left( 1 + \Det(\tau)\right).
$$

Finally, in order to compute $\lambda ((xyg)^{[m,\tau]})$, note that we need only include
an extra factor of $g^{\ot m} $ on the right: 
$$
   (xyg)^{[m,\tau]} = \mu ((1\ot \tau)^{\ot^{\frac{m}{2}}})
   \left(\sum_{i,j=1}^m x_i y_j \right) (g^{\ot ^{m}}).
$$
Since $m$ is even, the number of new factors of $g$ to be included, in
comparison to our previous calculation, is even, and so a similar analysis
applies. One checks that the extra factors of $g$ do not affect the result,
and so 
$$
  \lambda ((xyg)^{[m,\tau]}) = \lambda ((xy)^{[m,\tau]}) = \frac{m^2}{4}
   \left(1 + \Det(\tau)\right).
$$
Consequently, $\nu_{m,\tau} (A) =\displaystyle{ \lambda \big(\Lambda^{[m,\tau]}\big) = 
\frac{m^2}{2} \left( 1 + \Det (\tau)\right)}$. 

To see the conclusion of Case (1), note that since $\tau^2=1$, the determinant of $\tau$
is either $1$ or $-1$.

Case (2): 
A similar analysis applies. Note that $\lambda( (xy)^{[3,\tau]} ) =
\mu(1\ot \tau\ot \tau^2) (\sum_{i,j=1}^3 x_iy_j)$ and that
$\tau^2(x)= (a^2 + bc)x + b(a+d)y$, $\tau^2(y) = c(a+d)x + (d^2+bc)y$.
In evaluating $\lambda((xy)^{[3,\tau]})$, we again need only consider
$(i,j)$ terms for which $i,j$ have the same parity. By contrast, in evaluating
$\lambda((xyg)^{[3,\tau]})$, we need only consider $(i,j)$ terms for
which $i,j$ have different parity. Thus we find
\begin{eqnarray*}
  \lambda ((xy)^{[3,\tau]}) & = & \lambda \left( xy + x(\tau^2\cdot y) + yg(\tau^2\cdot x) g
   +(\tau^2\cdot xy) + (\tau\cdot xy)\right)\\
   &=& 1 + (d^2 + bc) + (a^2 +bc) + (a^2+bc)(d^2+bc) - bc(a+d)^2 + (ad-bc),\\
\lambda((xyg)^{[3,\tau]}) &=& \lambda\left(xg^2(\tau\cdot y)g^4 + yg(\tau\cdot x)g^5
     + g (\tau\cdot x) g^2(\tau^2\cdot y)g + g(\tau\cdot y) g(\tau^2\cdot x) g^2 \right) \\
  &=& d + a + a(d^2+bc) - bc(a+d) -bc(a+d) + d(a^2+bc).
\end{eqnarray*}
Adding these together, we have
\begin{eqnarray*}
\lambda(\Lambda^{[3,\tau]}) & = & 1 + a+d+a^2+ad+d^2+a^2d+ad^2+a^2d^2
     -abc -2abcd - bcd + bc + b^2 c^2 \\
  &=& (\Tr(\tau) + \Det(\tau))^2 + (1+\Tr(\tau))(1-\Det(\tau)) .
\end{eqnarray*} 
 To see the conclusion in Case (2), one can check the possible Jordan forms of the 
matrix for $\tau$.
\end{proof}


\section{Tensor products and category of modules}\label{sec:tp}

The following theorem generalizes \cite[Theorem 2.1]{BW} from the case that $A$ is 
a group algebra, to the case that $A$ is a Hopf algebra. Let $K =A\nat k^G$
as before, and recall that for $M$ a $K$-module and $x\in G$, $M_x$ denotes
$p_x\cdot M$, a $K$-submodule of $M$, and 
$M=\oplus_{x\in G} M_x$. 
If $y\in G$, define $ {}^y M_x$ to be $M_x$ as a vector space, with $A$-module structure
given by $a\cdot_y m = (y^{-1}\cdot a )\cdot m$ for all $a\in A$, $m\in M$. 

\begin{thm}
Let $M,N$ be $K$-modules. Then
\begin{itemize}
\item[(i)]
$\displaystyle{
   (M\ot N)_x \cong \bigoplus_{\substack{y,z\in G \\ yz=x}}
     M_y \ot \ {}^yN_z 
}$, and
\item[(ii)]
 $ (M^*)_x = \  ^{x}\! (M_{x^{-1}})^*$. 
\end{itemize}
\end{thm}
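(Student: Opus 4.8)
The plan is to treat the two parts separately, in each case first pinning down the $k^G$-grading (i.e.\ the decomposition into $p_x$-eigenspaces) as vector spaces, and then computing the residual $A$-action and matching it against the twisted modules ${}^yN_z$ and ${}^x(M_{x^{-1}})^*$ defined just before the theorem. Throughout I regard $A$ as sitting inside $K$ via $a\mapsto a\nat 1=\sum_{v\in G}a\nat p_v$ and $k^G$ via $p_x\mapsto 1\nat p_x$; since $K$ is a tensor product of algebras these two images commute, the $1\nat p_x$ are orthogonal idempotents, and on any $K$-module the component $M_x=p_x\cdot M$ is an $A$-submodule on which $1\nat p_y$ acts as $\delta_{y,x}$. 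Thus each $M_x$ is completely described by its $A$-module structure, and it suffices to prove both parts at the level of $A$-modules.

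For (i), I would first compute $\Delta(1\nat p_x)=\sum_{y\in G}(1\nat p_y)\ot(1\nat p_{y^{-1}x})$ (the $a=1$ case of the comultiplication formula), so that for $m\in M_y$ and $n\in N_z$ the element $m\ot n$ lies in $(M\ot N)_x$ exactly when $yz=x$. This yields the vector-space decomposition $(M\ot N)_x=\bigoplus_{yz=x}M_y\ot N_z$. To identify the $A$-action, I would expand $\Delta(a\nat 1)$: summing the comultiplication formula over all $p_v$ collapses the second tensor leg, giving $\Delta(a\nat 1)=\sum_{y\in G}(a_1\nat p_y)\ot((y^{-1}\cdot a_2)\nat 1)$. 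Applying this to $m\ot n$ with $m\in M_y$ (so only the single value $y$ survives) yields $a\cdot(m\ot n)=(a_1\cdot m)\ot((y^{-1}\cdot a_2)\cdot n)$, which is precisely the tensor-product $A$-action on $M_y\ot {}^yN_z$, since ${}^yN_z$ carries the action $a\cdot_y n=(y^{-1}\cdot a)\cdot n$. Summing over all $(y,z)$ with $yz=x$ gives (i).

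For (ii), using the left-dual convention $(k\cdot f)(m)=f(S_K(k)\cdot m)$, I would first locate $(M^*)_x$. Since $S_K(1\nat p_x)=1\nat p_{x^{-1}}$, one finds $((1\nat p_x)\cdot f)(m)=f(p_{x^{-1}}\cdot m)$, so $(M^*)_x$ consists exactly of the functionals supported on $M_{x^{-1}}$; restriction identifies it canonically with $(M_{x^{-1}})^*$, which accounts for the equality rather than a mere isomorphism. For the $A$-action I would compute $S_K(a\nat 1)=\sum_{v\in G}(v^{-1}\cdot S(a))\nat p_{v^{-1}}$ and apply it to $m\in M_{x^{-1}}$; only $v=x$ survives the $p_{v^{-1}}$, giving $S_K(a\nat 1)\cdot m=(x^{-1}\cdot S(a))\cdot m$ and hence $(a\cdot f)(m)=f((x^{-1}\cdot S(a))\cdot m)$. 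On the other side, the twisted dual ${}^x(M_{x^{-1}})^*$ has action $(a\cdot_x g)(m)=g(S(x^{-1}\cdot a)\cdot m)$. The two agree once one invokes that a Hopf automorphism commutes with the antipode, i.e.\ $S(x^{-1}\cdot a)=x^{-1}\cdot S(a)$, finishing (ii).

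The computations are essentially bookkeeping with the comultiplication, counit, and antipode of $K$, so the main obstacle is organizational: keeping the twisting group elements and their inverses straight (the asymmetry $y$ versus $y^{-1}$, and the switch $x\leftrightarrow x^{-1}$ in the dual) and confirming that the structure produced by restricting the $K$-action matches the specific twisted modules ${}^yN_z$ and ${}^x(M_{x^{-1}})^*$ rather than some other twist. The one genuinely structural input, as opposed to formula-pushing, is the compatibility of a Hopf automorphism with the antipode used in (ii); everything else follows by careful substitution.
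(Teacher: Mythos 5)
Your proposal is correct and follows essentially the same route as the paper: a direct computation with the comultiplication of $K$ to get the grading $(M\ot N)_x=\bigoplus_{yz=x}M_y\ot N_z$ and the twisted $A$-action on the tensor factors, and the antipode computation $f(S_K(a\nat p_{x})\cdot m)$ together with $S(x^{-1}\cdot a)=x^{-1}\cdot S(a)$ for the dual. Your only (harmless) variation is organizing the bookkeeping through the embeddings $a\mapsto a\nat 1$ and $p_x\mapsto 1\nat p_x$ and computing $\Delta(a\nat 1)$ and $S_K(a\nat 1)$, where the paper applies $\Delta$ and $S$ to $a\nat p_x$ directly on modules concentrated in a single component.
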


\begin{proof}
The proof is a straightforward generalization of that of 
\cite[Theorem 2.1]{BW}. We include details for completeness.
We will prove the statement for modules of the form
$M=M_y$, $N=N_z$.
Let $\phi: M_y\ot N_z\rightarrow  M_y\ot {}^y N_z$, where
the target module is a $K$-module on which $p_{yz}$ acts as
the identity and $p_w$ acts as 0 for $w\neq yz$, be defined by
$\phi(m\ot n) = m\ot n$ for all $m\in M_y$, $n\in N_z$.
We check that $\phi$ is a $K$-module homomorphism: 
Let $x\in G$, $a\in A$. Apply $\Delta$ to $a\nat p_x$ to obtain
$$
  \phi ( (a\nat p_x) (m\ot n)) =
    \sum \delta_{x,yz} \phi(a_1 m \ot (y^{-1}\cdot a_2) n).
$$
On the other hand, 
$$
  (a\nat p_x) \phi(m\ot n) = \sum \delta_{x,yz} a_1 m \ot (y^{-1}\cdot a_2)n.
$$
As $\phi$ is a bijection by its definition, it is an isomorphism of $K$-modules.

We will prove that since $M = M_y$, its dual satisfies 
$M^* = (M^*)_{y^{-1}}$, and that the corresponding 
underlying $A$-module structure on the vector space $(M^*)_{y^{-1}}$
is isomorphic to $^{y^{-1}}(M_y)^*$. To see this, first let $x\in G$, $f\in M^*$,
and $m\in M$. Then
$$
   ((1\nat p_x ) (f))(m) = f((1\nat p_{x^{-1}}) m) = \delta_{x^{-1},y} f(m).
$$
It follows that $(M_y)^* = M^* = (M^*)_{y^{-1}}$, as claimed.
The $A$-module structure on $(M^*)_{y^{-1}}$ may be determined by considering the
action on $M^*$ of all elements of $K$ of the form $a\nat p_{y^{-1}}$ where $a\in A$.
Let $f\in M^*$ and $m\in M$. Then
$$
  ((a\nat p_{y^{-1}})(f))(m) = f(S(a\nat p_{y^{-1}})m) =
   f((y\cdot S(a))m).
$$
Considering the restriction of $M^*=(M^*)_{y^{-1}}$ to an $A$-module in this way,
we see that the action of $a$ on the vector space $(M_y)^*$ is that of $a$ on
the $A$-module ${}^{y^{-1}}(M_y)^*$: 
$$
  (a\cdot_{y^{-1}} f)(m) = ((y\cdot a) f)(m) = f(S(y\cdot a)m) =
    f((y\cdot S(a))m).
$$
Therefore the $A$-module structure on the vector space $(M^*)_{y^{-1}}$ is that of the $A$-module
${}^{y^{-1}}(M_y)^*$. 
\end{proof}

\begin{remark} {\rm As a consequence of the theorem, the category of $K$-modules is 
equivalent to the {\em semidirect product tensor category}
${\mathcal {C}}\rtimes G$
where $\mathcal C$ is the category of $A$-modules.
By definition, ${\mathcal{C}}\rtimes G$ 
is the category $\oplus_{g\in G} {\mathcal{C}}$,
with objects $\oplus_{g\in G} (M_g, g)$ where each $M_g$ is an object of $\mathcal C$,
and tensor product $(M,g)\ot (N,h)= (M\ot {}^g N, gh)$. 
See \cite{T}, where the notation ${\mathcal{C}} [G]$ is used instead for this
semidirect product category.
For other occurrences of ${\mathcal{C}}\rtimes G$ in the literature,
see, for example, \cite{GNaNi,Ni}.}
\end{remark}

\end{document}